\theoremstyle{plain}
\newtheorem{thm}{Theorem}[section]
\newtheorem*{thm*}{Theorem}
\newtheorem{prop}{Proposition}[section] 
\newtheorem{lem}{Lemma}[section] 
\newtheorem{cor}{Corollary}[section]
\newtheorem*{cor*}{Corollary}
\newtheorem{defi}{Definition}[section]
\newtheorem{rem}{Remark}
\newcommand {\R} {\mathbb{R}} \newcommand {\Z} {\mathbb{Z}}
\newcommand {\T} {\mathbb{T}} 
\newcommand {\p} {\partial}
\newcommand {\dt} {\partial_t}
\begin{document}
\title[Linear inviscid damping without scattering]{On the forced Euler and
  Navier-Stokes equations: Linear damping and modified scattering}
\begin{abstract}
  We study the asymptotic behavior of the forced linear Euler and nonlinear
  Navier-Stokes equations close to Couette flow on $\T \times I$. As our main result we show that for smooth
  time-periodic forcing linear inviscid damping persists, i.e. the velocity
  field (weakly) asymptotically converges.
  However, stability and scattering to the transport problem fail in $H^{s}, s>-1$.
  We further show that this behavior is consistent with the nonlinear Euler
  equations and that a similar result also holds for the nonlinear Navier-Stokes
  equations.
  Hence, these results provide an indication that nonlinear inviscid damping may still hold in Sobolev
  regularity in the above sense despite the Gevrey regularity instability
  results of \cite{dengmasmoudi2018}.
\end{abstract}
\author{Christian Zillinger}
\address{
Department of Mathematics,
University of Southern California,
3620 S. Vermont Avenue,
Los Angeles, CA 90089-2532, US}
\email{zillinge@usc.edu}
\maketitle
\tableofcontents
\section{Introduction}
\label{sec:introduction}
The problems of inviscid damping and enhanced dissipation are classical, going
back to the works of \cite{orr1907stability} and Rayleigh \cite{Rayleigh} around
1900, who studied the linearized Euler and Navier-Stokes equations around
Couette flow, $v=(U(y),0)$.
In that case, the linearized inviscid problem in terms of the vorticity is given by
\begin{align*}
  \partial_t \omega + y \p_x \omega =0 
\end{align*}
and explicitly solvable by the method of characteristics as $\omega(t,x,y)=\omega_0(x-ty,y)$.
In particular, one observes that the vorticity converges weakly (but not
strongly) in $L^2$ to its $x$ average and as a consequence, the corresponding
velocity field converges strongly to a shear flow. This in view of the
conservation law structure of Euler's equations at first very unexpected
stabilization mechanism is known as \emph{inviscid damping} in analogy to Landau
damping in plasma physics.
However, while the result for Couette flow is classical, until recently little
has been known about linearizations around other profiles or the associated nonlinear problem.
Following the seminal works of Mouhot and Villani on Landau damping for the
Vlasov-Poisson equation, \cite{Villani_long}, \cite{Villani_short},
\cite{Villani_script}, \cite{Landau}, \cite{bedrossian2016landau} these problems have attracted much renewed
interest. For a more extensive discussion of the literature we refer to
\cite{bedrossian2016enhanced} and briefly mention the following works:
\begin{itemize}
\item The problem of nonlinear inviscid damping and enhanced dissipation for
  Couette flow in an infinite periodic channel has been studied in series of
  works by Bedrossian, Masmoudi, Vicol, Wang and others \cite{bedrossian2016enhanced}, \cite{bedrossian2016sobolev},
  \cite{bedrossian2015inviscid}, \cite{bedrossian2015dynamics},
  \cite{bedrossian2015dynamics2}, \cite{bedrossian2016enhanced},
  \cite{bedrossian2013asymptotic}, \cite{bedrossian2013inviscid}. Here, in the
  inviscid setting Gevrey regularity has been shown to be necessary to control
  the effect of nonlinear resonances, also called \emph{echos}, \cite{dengmasmoudi2018}.
\item Concerning the linearized problem around profiles different from Couette
  efforts have focused on establishing inviscid damping and enhanced dissipation for more general and
  degenerate classes of profiles.
  Here, the works of the author and Coti Zelati, \cite{Zill6}, \cite{Zill3},
  \cite{Zill5}, \cite{CZZ17} rely on a dispersive approach using multiplier and
  decompositions. Using a spectral approach Wei, Zhang and Zhao have been
  able to establish linear inviscid damping for different, more general classes
  of profiles, \cite{Zhang2015inviscid}, \cite{wei2018linear}, \cite{wei2018transition}, \cite{wei2017linear},
  \cite{stepin1995nonself}.
\item When considering the setting of a finite periodic channel, $\T \times
  [0,1]$, or other domains with boundaries one additionally encounters
  instabilities due to boundary effects, which generally restrict to working with
  fractional Sobolev spaces $H^s, s<\frac{3}{2}$ or weighted $H^2$ spaces,
  \cite{Zill5}, \cite{Zill6}, \cite{Zhang2015inviscid}, \cite{CZZ17}.
  This is of particular interest in view of the much higher regularity requirements
  of present results on the nonlinear problem and known lower bounds on the required
  regularity \cite{Lin-Zeng}.
  Here, recently Ionescu and Jia \cite{Ionescu2018inv} have been able to show
  that for Gevrey regular vorticity compactly supported away from the boundary
  these blow-up mechanisms can be avoided and nonlinear inviscid damping holds.
\item As a related problem damping, mixing and enhanced dissipation for passive
  scalar problems are an active area of research \cite{zelati2018relation},
  \cite{alberti2014exponential}, \cite{crippa2017cellular}, \cite{Crippa17}, \cite{zillinger2018geometric}.
\end{itemize}
Following the results of Bedrossian-Masmoudi \cite{bedrossian2015inviscid} on
inviscid damping in Gevrey regularity, it has been shown by Deng-Masmoudi in
\cite{dengmasmoudi2018} that Gevrey regularity is necessary for (asymptotic)
stability with respect to the linearized dynamics in the setting of an unbounded
channel.

However, as also noted there, it remains an open question whether inviscid damping, i.e. convergence in
$H^{-1}$, fails otherwise. This question is particularly relevant for the
setting of a finite channel, where generally boundary instabilities yield asymptotic
blow-up as $t\rightarrow \infty$ in $W^{1,\infty}$ and consequently in relatively low Sobolev norms \cite{Zill5}.

In this work we make a first modest step in the direction of addressing this
question and consider the behavior of the forced equations for shear flows close
to Couette flow in a periodic channel:
\begin{align}
  \label{eq:1}
  \begin{split}
    \dt v + v \cdot \nabla v &= \nabla p +F + \nu \Delta v, \\
    \nabla \cdot v &=0.
  \end{split}
\end{align}
The velocity field of the fluid is denoted by $v \in \R^2$ and $F \in \R^{2}$ is a
given force field. The pressure $p$ can be interpreted as a Lagrange multiplier
ensuring the incompressibility, i.e. $p$ is determined by solving
\begin{align*}
  \Delta p = \nabla \cdot (v \cdot \nabla v)- \nabla \cdot F.
\end{align*}
In this article we consider
\begin{itemize}
\item the linearized forced Euler equations on $\T \times I$ around shear
  profiles $U(y)$ satisfying linear inviscid damping (c.f. properties
  \eqref{eq:9} to \eqref{eq:11} in Section \ref{sec:linEuler}),
\item the associated consistency problem and
\item the nonlinear Navier-Stokes equations on $\T \times \R$.
\end{itemize}
The choice of a bounded interval $I$ for the inviscid setting here allows us to
restrict to the case when  $U(y)$ is bounded above and below and so that we may
restrict to studying unweighted spaces. However, as a severe drawback the comparably low Sobolev
regularity available in this setting, makes the consistency problem of Section
\ref{sec:linEuler} very challenging.
In Section \ref{sec:simple}, when considering the Navier-Stokes equations, we instead
restrict to the case without boundary, $\T \times \R$.
\\

We study stability in Sobolev regularity, where we consider smooth deterministic
forcing of the following types, where $f= \nabla \times F$ (or the perturbation thereof):
\begin{itemize}
\item $f \in H^{1}_{loc}(\R_{+}, H^{1}(\T \times I))$ is of ``stationary type''
  if it is periodic in $t$ with period $T>0$. 
\item $f \in H^{1}_{loc}(\R_{+}, H^{1}(\T \times I))$ is ``resonant'' if
  $f(t,x-tU(y),y)$ is periodic in $t$ with period $T>0$.
\end{itemize}
The types of forcing considered here are intended to mimic the effect of echoes
and thus provide insights into what kind of (in)stability results and whether
(modified) scattering results can be expected.
In particular, we show that a control in just $H^{1}_{loc}(\R_{+}, H^{1}(\T
\times I))$ is ``too rough'' to capture cancellation behavior and that inviscid
damping may persists despite instability.

\subsection{Main results}
\label{sec:main}

As our first main result, we show that for the forced linearized inviscid problem
\begin{align}
  \label{eq:lin}
  \begin{split}
    \dt \omega + U(y)\p_{x} \omega - U''(y)\p_{x}\phi&= f, \\
    \Delta \phi &= \omega,
  \end{split}
\end{align}
with the above types of forcing both linear inviscid damping and algebraic
instability in Sobolev regularity may hold at the same time.
Furthermore, this behavior is consistent with the nonlinear equations, where
both types of forcing naturally appear.

Our main results for the forced linearized inviscid problem and the associated
consistency problem are summarized in the following theorem.
\begin{thm}
  \label{thm:main}
  Let $U$ be a flow profile that is bounded above and below on the interval $I \subset \R$
  and such that linear inviscid damping holds in the sense that conditions
  \eqref{eq:9} to \eqref{eq:11} are satisfied (c.f. Section \ref{sec:linEuler}).
  Suppose further that $f \in H^{1}_{loc}(\R_{+}, H^{1}(\T \times I))$ is
  periodic in $t$ with period $T>0$ with vanishing average in $x$,  $\int f(t,x,y)dx=0$.
  Then we obtain the following results when $f$ is of resonant type:
  \begin{enumerate}
  \item The evolution for $W(t,x,y):= \omega(t,x+tU(y),y)$ is algebraically
    unstable in $H^{s}$ for any $s>-1$.
  \item Linear inviscid damping holds in a weak sense. That is, $v(t)$ is
    uniformly bounded in $L^{2}$ and there exists $v_{\infty} \in L^{2}$
    such that \[v(t) \rightharpoonup v_{\infty} \]
    as $t \rightarrow \infty$.
  \end{enumerate}
  If instead $f$ is of stationary type and non-degenerate (c.f. Section
  \ref{sec:linEuler}), then
\begin{enumerate}
\item The evolution for $W(t,x,y):= \omega(t,x+tU(y),y)$ is algebraically
    unstable in $H^{s}$ for any $s>0$. 
  \item While $\omega(t)$ is stable in $L^{2}$, neither $w(t,x,y)$ nor $W(t,x,y)$
    converge as $t \rightarrow \infty$.
    Instead there is a sum space decomposition
    \[\omega(t,x,y)= \omega_{1}(t,x-tU(y),y) + \omega_{2}(t,x,y),\]
    where both $\omega_{1}, \omega_{2}$ are stable in $L^{2}$ and converge as $t
    \rightarrow \infty$.
  \item The evolution of $\omega$ is asymptotically stable in $H^{s}$ for any
    $-1\leq s<0$. In particular, linear inviscid damping holds.
  \end{enumerate}
  Let $\omega(t)$ denote the solution of the forced linearized Euler equations
  where $f$ is in the stationary case and assume that $\omega_0, \p_x \omega_0,
  g, \p_x \omega_0 \in H^{3/2}\cap W^{1,\infty}$.
  Then the Duhamel integral
    \begin{align*}
    \sigma(t,x+tU(y),y):= \int_0^t S(t,\tau)(\nabla^\bot \phi \cdot \nabla \omega)(\tau,x-\tau U(y), y) d\tau
  \end{align*}
  is uniformly bounded in $H^{-1}$ and weakly asymptotically convergent, but grows unbounded in $H^s$ for any $s>-1$.
\end{thm}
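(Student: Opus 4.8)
\section*{Proof proposal for the Duhamel statement}

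The plan is to treat the Duhamel correction $\sigma$ as being driven by a forcing of \emph{stationary type} and thereby reduce its analysis to the stationary-case structure already obtained in the theorem. Concretely, I would first observe that the integrand $N:=\nabla^\bot\phi\cdot\nabla\omega$ is built from the stationary-case linear solution $\omega$ and its stream function $\phi$. Using the sum-space decomposition $\omega=\omega_1(t,x-tU(y),y)+\omega_2(t,x,y)$ together with the fact that the velocity $v=\nabla^\bot\phi$ is bounded and (weakly) convergent in $L^2$, one checks that $N(\tau,x-\tau U(y),y)$ is, up to the transport phase, $T$-periodic in $\tau$ with vanishing $x$-average, i.e.\ precisely a stationary-type forcing in the moving frame. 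The regularity hypotheses $\omega_0,\p_x\omega_0,g\in H^{3/2}\cap W^{1,\infty}$ enter here to close the product estimates for $N$ and its first derivatives at the threshold regularity available in the bounded channel. Passing to the partial Fourier series in $x$ and following the propagator $S(t,\tau)$, the $k$-th mode of $\sigma$ becomes, schematically, an oscillatory integral of the form $\int_0^t e^{ik(\tau-t)U(y)}\,a_k(\tau,y)\,d\tau$ with $a_k$ controlled by $N$.

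For the boundedness in $H^{-1}$ and the weak convergence I would mirror the stationary-case argument used for asymptotic stability in $H^s$, $-1\le s<0$. The point is that the $H^{-1}$ weight $(k^2+\eta^2)^{-1/2}$, with $\eta$ dual to $y$, is exactly strong enough to absorb the single power of $t$ produced when a $y$-derivative falls on the phase $e^{iktU(y)}$; this yields a uniform-in-$t$ bound and is the quantitative form of the cancellation advertised in the introduction (that $H^1_{loc}(\R_+,H^1)$ control is ``too rough'' to see, but $H^{-1}$ is fine). Weak convergence then follows in two steps: for each fixed frequency the oscillatory integral converges as $t\to\infty$ by the non-stationary-phase mechanism encoded in conditions \eqref{eq:9} to \eqref{eq:11}, and the uniform $H^{-1}$ bound supplies the compactness needed to upgrade pointwise-in-frequency convergence to weak convergence in $H^{-1}$.

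The unboundedness in $H^s$ for every $s>-1$ is a matching lower bound. Here I would isolate the genuine secular resonance: because $N$ is $T$-periodic in the moving frame with non-degenerate $x$-Fourier content, there is a fixed $k\neq 0$ and a frequency $n\tfrac{2\pi}{T}$ for which $n\tfrac{2\pi}{T}+kU(y)$ vanishes on a nontrivial set of $y$, so that the corresponding amplitude grows linearly in $t$ there. The crucial new feature, compared with the bare linear solution $W$ (which is unstable only for $s>0$), is that the forcing $N$ carries a factor $\nabla\omega$ whose moving-frame representation contains the secularly growing term $-\tau U'(y)\p_x W$; this extra power of $\tau$ inside the Duhamel integral is exactly what shifts the instability threshold from $s>0$ down to $s>-1$, so that $s=-1$ is the borderline at which the $H^{-1}$ weight just compensates the growth. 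The non-degeneracy assumption on the stationary forcing guarantees that this leading resonant mode is not annihilated by the nonlinearity, so the lower bound $\|\sigma(t)\|_{H^s}\to\infty$ is not spurious.

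The main obstacle, as in the linear part of the theorem, is to run the two competing estimates simultaneously at the threshold regularity: proving that the \emph{same} oscillatory integral is uniformly bounded in $H^{-1}$ yet unbounded in every $H^s$ with $s>-1$ demands sharp, non-lossy control of the phase integrals and a clean separation of the resonant mode from the transient, convergent contributions. This is compounded by the bounded channel, which forces fractional Sobolev spaces and makes the product estimates for $\nabla^\bot\phi\cdot\nabla\omega$ delicate; verifying that the nonlinearity genuinely preserves a non-degenerate resonant component is where most of the work lies.
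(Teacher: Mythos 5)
There is a genuine gap, and it lies in your central reduction. You propose to treat $N=\nabla^\bot\phi\cdot\nabla\omega$ as a \emph{stationary-type} forcing in the moving frame and then recycle the stationary-case analysis. This cannot work, and it contradicts the conclusion you are trying to prove: stationary-type forcing produces instability only in $H^s$, $s>0$, whereas the claimed threshold is $s>-1$, which in this paper is the signature of \emph{resonant}-type forcing. The actual proof (Proposition \ref{prop:conistency}, modeled on Proposition \ref{prop:couetteconsistency}) never treats $N$ as a single forcing of either type; it splits $\omega=\omega_1(t,x-tU(y),y)+\omega_2(t,x,y)$ \emph{and} $\phi=\phi_1+\phi_2$ accordingly, producing four products $I$--$IV$ with qualitatively different behavior. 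The dangerous contribution is the mixed term $IV=\nabla^\bot\phi_2(x+\tau U(y),y)\cdot\nabla\omega_1(x,y)$: the chain rule on the transported factor generates an explicit factor $\tau\,\p_x\phi_2$, and integrating in time yields the secular term $\frac{T}{U(y)}\phi_2(x+TU(y),y)\,\p_x\omega_1(x,y)$ (c.f. \eqref{eq:28}), a product of one oscillating and one non-oscillating factor. This is what grows in every $H^s$, $s>-1$, while the identity $\frac{T}{U(y)}\phi_2(x+TU(y),y)=\frac{1}{U(y)^2}(\frac{d}{dy}-\p_y)\p_x^{-1}\phi_2(x+TU(y),y)$ together with the duality formulation of $H^{-1}$ (requiring $\p_y\p_x\omega_1\in L^\infty$) gives the uniform $H^{-1}$ bound. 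Your critical-layer mechanism --- a $k\neq 0$ and frequency $n\frac{2\pi}{T}$ with $n\frac{2\pi}{T}+kU(y)=0$ on a set of $y$ --- is also off target here: in the Duhamel statement $f$ is already in the stationary case (reduced to $c=0$), and $U$ is assumed bounded above and below precisely so that no such degeneracy occurs on $I$; the resonance is a frequency matching \emph{within the product} of the transported and stationary components of the solution itself, not a vanishing time-frequency.

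Your positive-direction strategy also rests on machinery that is unavailable for general $U$. Passing to Fourier modes and writing $\sigma_k$ as $\int_0^t e^{ik(\tau-t)U(y)}a_k(\tau,y)\,d\tau$ implicitly treats $S(t,\tau)$ as a multiplier compatible with products; the paper stresses the opposite (``the solution operators $S(t,\tau)$ are generally not compatible with products''), which is why the proof proceeds instead by integration by parts in $\tau$ using the antiderivative $\frac{1}{U(y)}\p_x^{-1}$ on the oscillating factor, controls $\dot S$ via \eqref{eq:10}, absorbs the borderline $\dot S$ contribution in term $IV$ through the frequency decomposition $\Omega_1(t)=\{(k,\eta):|\eta|\geq ct\}$ with a resonant/non-resonant case distinction, and accommodates the logarithmic growth $\|\nabla\omega_1\|_{L^\infty}\leq C\log(2+t)$ and the $H^s$, $s<\frac{3}{2}$, ceiling from the boundary. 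Two of your heuristics are sound --- the $H^{-1}$ weight absorbing exactly one power of $t$ because the transported factor lives at frequency $\eta\sim kt$, and the observation that an extra power of $\tau$ in the integrand shifts the threshold from $s>0$ to $s>-1$ --- but as stated they sit on top of the incorrect stationary-type reduction, and without the four-term decomposition, the term-$IV$ analysis, and the iterated integration by parts for the $\dot\omega_1$ contribution, neither the upper bound in $H^{-1}$ nor the lower bound in $H^s$, $s>-1$, is actually established.
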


\begin{rem}
  In view of existing results for the unforced problem, the most important new
  phenomena and results here are:
  \begin{itemize}
  \item We give an explicit setting of the forced linearized problem, where linear
    inviscid damping holds, but the equation does not scatter to the transport problem.
    Considering the nonlinear problem as a forced linear problem and in view of
    the results of Deng-Masmoudi, \cite{dengmasmoudi2018}, we consider this as
    an important first step suggesting that nonlinear inviscid damping may
    similarly hold in lower regularity in spite of instability.
  \item Instead of scattering to the transport problem, we observe stability in
    a sum space consisting of \emph{transport-like} and \emph{stationary-like}
    behavior.
    However, in the consistency problem it is shown that for the nonlinear
    problem still further refinements are necessary.
  \item Stationary streamlines interacting with the forcing and ``shear
    behavior'' with respect to frequency in time (c.f Section \ref{sec:linEuler}) are interesting mathematical
    effects, which might also be of interest from a physical perspective.
  \item Even in the case of particularly simple forcing such a time-independent
    forcing, the consistency equation exhibits resonant behavior and instability
    in $H^s, s>-1$. However, the evolution is weakly asymptotically stable in $H^{-1}$.
  \item A key challenge in the consistency problem is given by the rather low
    regularity of (forced) solutions and the very different time-dependence of
    factors in the nonlinearity $v \cdot \nabla \omega$.
    In view of the question of persistence of inviscid damping, we hence focus
    on the analysis in negative Sobolev spaces.
  \end{itemize}
\end{rem}

Concerning the full nonlinear problem, we obtain a similar result for the
Navier-Stokes equations, where we further restrict the choice of (small) forcing.
That is, we choose $\int f(t,x,y) dx$ to fix the $x$ average of the vorticity
and for simplicity consider the case where $f- \int f dx$ is stationary.
\begin{thm}[The forced Navier-Stokes problem near Couette flow]
  Consider the linearized forced Navier-Stokes equations near Couette flow and
  let $\omega_0, f_0 \in H^s, s\geq 0$ with $\int \omega_0 dx =0 = \int f_0 dx$.
  Then in the case of a resonant forcing $f(t,x,y)=f_0(x+ty,y)$, there exists a
  decomposition
  \begin{align*}
    \omega(t,x,y)=\omega_1(t,x+ty,y)+ \omega_2(t,x+ty,y)
  \end{align*}
  such that both $\omega_1, \omega_2$ are stable in $L^2$ and $\omega_1$ exhibits enhanced
  dissipation, but $\omega_2$ generally only exhibits decay at an algebraic rate
  \begin{align*}
    \exp(C \nu t^3) \|\omega_1(t)\|_{L^2} + \|\omega_2(t)\|_{L^2} \leq C_\nu  (\|\omega_0\|_{L^2} + \|f_0\|_{L^2}).
  \end{align*}
  If the forcing is stationary, then there exists a stationary solution $g \in H^{1}$
  of the linearized problem and any solution with initial data $\omega_0$ is
  damped towards $g$ at super-exponential rates
  \begin{align*}
    \|\omega(t)-g\|_{L^2}\leq \exp(-\frac{\nu}{3}t)\|\omega_0-g\|_{L^2}.
  \end{align*}

  For the nonlinear forced Navier-Stokes equations around Couette flow, we
  consider $f \in L^{2}$ time-independent with $\|f\|_{L^{2}}\leq
  \frac{\nu}{40}$. Then there exists a stationary solution $g \in H^{1}$ of the equation
  \begin{align*}
    y \p_{x} g +\nu \Delta g + (v_{g}\cdot \nabla g)_{\neq} = f,
  \end{align*}
  where $()_{\neq}$ denotes the $L^2$ projection on functions with vanishing $x$-average.
  
  Furthermore, solutions $\omega$ of the forced Navier-Stokes equation with different
  initial data
  \begin{align*}
    \dt \omega + y \p_{x} \omega + \left(\nabla^{\bot}\phi \cdot \nabla \omega  \right)_{\neq} -\nu \Delta \omega =f,
  \end{align*}
  with $f$ chosen to ensure the vanishing $x$ average, can be decomposed as
  \begin{align*}
    \omega=\omega^{\star}+g,
  \end{align*}
  where $\omega^{\star}$ decays (at least) as $\exp(-\frac{\nu}{2}t)$ in
  $L^{2}$.
\end{thm}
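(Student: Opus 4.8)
The plan is to read off all four assertions from a single explicit object, the linearized evolution written in the frame that undoes the shearing, and to treat the two nonlinear statements by pairing coercivity of the stationary operator with one perturbative energy identity. Writing the linearized forced equation near Couette flow as $\dt\omega + y\p_x\omega - \nu\Delta\omega = f$ and passing to the unwound profile $W(t,x,y):=\omega(t,x+ty,y)$, a Fourier transform in both variables converts the transport term into a pure phase shift, so that for each frequency $(k,\eta)$ the profile $\tilde W$ solves the scalar ODE
\[
  \dt\tilde W + \nu\bigl(k^2+(\eta-kt)^2\bigr)\tilde W = \tilde F(t,k,\eta),
\]
whose integrating factor is controlled by $A(t,k,\eta):=\int_0^t\bigl(k^2+(\eta-ks)^2\bigr)\,ds$. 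Two elementary facts drive everything: $A\ge\tfrac1{12}k^2t^3$ and $A\ge k^2t$, while $|k|\ge 1$ throughout because $\int\omega_0\,dx=\int f_0\,dx=0$ removes the non-dissipating zero mode. The forcing classes are distinguished by their image $\tilde F$ in this frame: the resonant forcing is exactly the one for which $\tilde F(t,k,\eta)=\tilde f_0(k,\eta)$ is independent of $t$.

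For the resonant case I would split the Duhamel formula $\tilde W=\tilde W_1+\tilde W_2$ into the homogeneous part $\tilde W_1=e^{-\nu A}\tilde W_0$ and the forced part $\tilde W_2=\tilde f_0\int_0^t e^{-\nu(A(t)-A(s))}\,ds$. The bound $A\ge\tfrac1{12}k^2t^3$ gives by Plancherel the enhanced dissipation estimate $\|\omega_1(t)\|_{L^2}\le e^{-c\nu t^3}\|\omega_0\|_{L^2}$, i.e.\ the factor $\exp(C\nu t^3)$ in the statement. For $\omega_2$ the monotonicity $A(t)-A(s)=\int_s^t\bigl(k^2+(\eta-k\tau)^2\bigr)\,d\tau\ge k^2(t-s)$ yields the uniform bound $\int_0^t e^{-\nu(A(t)-A(s))}\,ds\le(\nu k^2)^{-1}\le\nu^{-1}$ and hence $L^2$-stability $\|\omega_2(t)\|_{L^2}\le C\nu^{-1}\|f_0\|_{L^2}$; the Laplace-type asymptotics $\int_0^t e^{-\nu(A(t)-A(s))}\,ds\sim\bigl(\nu(k^2+(\eta-kt)^2)\bigr)^{-1}$ then show that $\omega_2$ decays, frequency by frequency, only at the algebraic rate $t^{-2}$ and so cannot acquire any enhanced dissipation. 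This gives both conclusions of the resonant part.

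For the stationary linear case I would first construct $g$ by inverting $L:=y\p_x-\nu\Delta$. The transport part is skew-adjoint, so $\operatorname{Re}\langle Lg,g\rangle=\nu\|\nabla g\|_{L^2}^2$, and Poincar\'e on the mean-zero-in-$x$ sector (where $|k|\ge 1$) makes $L$ coercive; Lax--Milgram then produces a unique $g\in H^1$ with $\|g\|_{H^1}\lesssim\nu^{-1}\|f\|_{L^2}$. The difference $\omega^\star:=\omega-g$ solves the homogeneous equation, hence is multiplied by $e^{-\nu A}$ in the unwound frame, and $A\ge k^2t\ge t$ yields $\|\omega^\star(t)\|_{L^2}\le e^{-\nu t/3}\|\omega_0-g\|_{L^2}$ (indeed the stronger super-exponential $e^{-c\nu t^3}$), which is the claimed damping towards $g$.

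The two nonlinear statements combine a fixed point with the same energy identity. For existence of the stationary solution I recast the equation as $g=L^{-1}\bigl(f-(v_g\cdot\nabla g)_{\neq}\bigr)$ and run a contraction on a small $H^1$ ball, using $\|L^{-1}h\|_{H^1}\lesssim\nu^{-1}\|h\|_{H^{-1}}$ and the two-dimensional product estimate $\|v_g\cdot\nabla g\|_{H^{-1}}\lesssim\|g\|_{H^1}^2$; testing against $g$, where the transport nonlinearity drops out by incompressibility, also fixes $\|\nabla g\|_{L^2}\le\|f\|_{L^2}/\nu\le\tfrac1{40}$. For the decay I set $\omega^\star:=\omega-g$, subtract the two equations, and compute
\[
  \tfrac12\tfrac{d}{dt}\|\omega^\star\|_{L^2}^2+\nu\|\nabla\omega^\star\|_{L^2}^2=-\langle(v_{\omega^\star}\cdot\nabla g)_{\neq},\omega^\star\rangle,
\]
in which the cubic self-interaction $\langle v_{\omega^\star}\cdot\nabla\omega^\star,\omega^\star\rangle$ and the background transport $\langle v_g\cdot\nabla\omega^\star,\omega^\star\rangle$ both vanish because the advecting fields are divergence free. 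The surviving term is bounded, via H\"older, Ladyzhenskaya's inequality and the identity $\|\nabla v_{\omega^\star}\|_{L^2}=\|\omega^\star\|_{L^2}$, by $C\|\nabla g\|_{L^2}\|\nabla\omega^\star\|_{L^2}^2$, so the smallness $\|\nabla g\|_{L^2}\le\tfrac1{40}$ lets it be absorbed into $\nu\|\nabla\omega^\star\|_{L^2}^2$; Poincar\'e then closes the loop to $\tfrac{d}{dt}\|\omega^\star\|_{L^2}^2\le-\nu\|\omega^\star\|_{L^2}^2$ and gives the $e^{-\nu t/2}$ decay. I expect the genuine difficulty to sit entirely in this last step: controlling the interaction of the perturbation velocity with the stationary vorticity gradient on the unbounded domain $\T\times\R$ and matching the functional-analytic constants so that the quadratic loss is strictly dominated by the dissipation under the stated threshold $\|f\|_{L^2}\le\nu/40$, whereas the purely linear assertions reduce to the explicit integrating factor and the two lower bounds for $A$.
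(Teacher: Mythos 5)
Your treatment of the two linear assertions matches the paper's proof essentially verbatim: the paper (Lemma 4.1, Corollary 4.2) also diagonalizes in the sheared frame, writes the Duhamel integral with the weight $A(t,k,\eta)=\int_0^t k^2+(\eta-ks)^2\,ds$, bounds the forced part by $\int_0^t e^{-\nu(A(t)-A(s))}ds\le(\nu k^2)^{-1}$, and proves the two-sided bound $b_\infty(k,\eta)\approx \nu^{-1}(k^2+\eta^2)^{-1}$ that you invoke via Laplace asymptotics (the paper does this by hand, splitting at $\xi=|\eta|/(10|k|)$). Two points, however, deviate from the paper in ways that matter. First, your Lax--Milgram step for the stationary problem is stated on $\T\times\R$ directly, but there the bilinear form $\langle h_1,y\p_x h_2\rangle$ is \emph{unbounded} on $H^1$ (the weight $y$ is unbounded), so Lax--Milgram does not apply as written; the paper is careful here, solving on truncated strips $\T\times(-R,R)$ with Dirichlet conditions, extracting a weak limit as $R\to\infty$, and proving uniqueness with cutoffs $\chi(y/R)$. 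This is fixable by exactly that exhaustion argument, but it is a genuine hole in the proposal as stated.

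The more serious gap is the final absorption. From $\|f\|_{L^2}\le\nu/40$ your testing argument yields only $\|\nabla g\|_{L^2}\le\nu^{-1}\|f\|_{L^2}\le 1/40$, and your Ladyzhanskaya bound then gives an error term $C\|\nabla g\|_{L^2}\|\nabla\omega^\star\|_{L^2}^2$ with $C$ an absolute constant. To absorb this into $\nu\|\nabla\omega^\star\|_{L^2}^2$ you need $C\|\nabla g\|_{L^2}\le\nu/2$, i.e.\ $\|f\|_{L^2}\lesssim\nu^2$ --- your claim that $\|\nabla g\|_{L^2}\le 1/40$ suffices fails for every $\nu<C/20$, which is precisely the interesting regime. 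The paper's actual proof (Propositions 4.3 and 4.4) operates under the stronger hypothesis $\|f\|_{H^1}\le\nu^2/40$, which makes the fixed point live in a ball of radius $2C_1\nu$ in $H^1$ and supplies the smallness $\|\nabla g\|_{L^\infty}\le c\nu$ used in the energy estimate
\begin{align*}
  \langle\omega^\star,v^\star\cdot\nabla g\rangle=-\langle\nabla\omega^\star,\phi^\star\nabla^\bot g\rangle\le c\nu\,\|\nabla\omega^\star\|_{L^2}\|\phi^\star\|_{L^2}\le c\nu\,\|\nabla\omega^\star\|_{L^2}^2,
\end{align*}
after which Poincar\'e in $x$ closes the Gr\"onwall loop. (The $\nu/40$ threshold in the theorem statement is inconsistent with the paper's own propositions; you took it at face value and the scaling does not close.) On the positive side, your estimate of the surviving term via Ladyzhenskaya needs only $\|\nabla g\|_{L^2}\lesssim\nu$ rather than the paper's $\|\nabla g\|_{L^\infty}\le c\nu$ --- which the paper's $H^2$ bound on $g$ does not in fact guarantee in two dimensions --- so once you replace the hypothesis by $\|f\|\lesssim\nu^2$ as in Proposition 4.3, your route through $L^4$ interpolation is, if anything, slightly more robust than the paper's.
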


\subsection{Organization of the Article}
\label{sec:outline}

The remainder of the article is organized as follows:
\begin{itemize}
\item In Section \ref{sec:couette} we fix notational conventions and
  discuss the prototypical case of Couette flow, $U(y)=y$, where explicit
  solution formulas are available. In particular, we see that the conditions of
  the theorem and lemma are necessary in that case.
\item In Section \ref{sec:linEuler} we study the problem for more general shear
  flows exhibiting linear inviscid damping and comment on extensions of these
  results, resonances and conditions on $f$. 
\item In Section \ref{sec:simple} we discuss the nonlinear viscous problem.
  Here, we for simplicity fix $\int \omega dx$ and consider stationary forcing.
\end{itemize}

\subsection*{Acknowledgments}
The author would like to thank the MPI MIS, where part of the project was written, for its hospitality.

\section{Model Case: Couette Flow}
\label{sec:couette}

As an instructive model, let us consider the linearization around Couette flow
$(U(y)=0, F=0)$ on $\T \times I$, for which the linearized equation greatly simplifies:
\begin{align}
  \label{eq:2}
  \begin{split}
    \dt \omega + y \p_x \omega &= f,\\
    \omega|_{t=0}&=\omega_0.
  \end{split}
\end{align}
As a further simplification, for the remainder of this section, we will assume
that the interval $I$ is chosen such that $|y|\geq 1$ on $I$.
In order to introduce ideas, we here consider two very specific cases of
forcing. As an at first sight somewhat artificial case, we consider $f_0 \in H^1$ with non-trivial dependence on $x$ and
\begin{align}
  \label{eq:3}
  f(t,x,y)=f_0(x-ty,y).
\end{align}
We call this the \emph{resonant} case, since here the evolution of
$W(t,x,y):=\omega(x+ty,y)$ involves
\begin{align*}
  \int_0^t f_0(x-(\tau- \tau)y,y) d\tau= t f_0(x,y)
\end{align*}
and thus mixing and oscillations are canceled due the resonance of the structure
of $f(t,x,y)$ with the underlying shear behavior. 
\\

As a second case, we consider $f$ to be \emph{time-independent}, that is
\begin{align}
  \label{eq:4}
  f(t,x,y)=f_0(x,y),
\end{align}
with $f_0$ as above.
Both cases should be considered as prototypical and to show that a more
fine-grained control than just of the $L^2$ norm is necessary to control the
long-term behavior of \eqref{eq:2}

\begin{rem}
  We remark that the stationary and resonant cases discussed in the
  introduction, Section \ref{sec:introduction}, can be reduced to these cases.
  For instance, if $f$ is a given time-periodic forcing, we may apply a Fourier
  transform in time to decompose it into $e^{ikct+ikx}\mathcal{F}_{x,t}f(c,k,y)$. Then
  $\omega_{k}(t,x,y):=e^{-ikct}\mathcal{F}_{x}\omega(t,k,y) e^{ikx}$ solves
  \begin{align*}
    \dt \omega_{k} + (y+\frac{c}{k}) \p_{x} \omega_{k}= \mathcal{F}_{x,t}f(c,k,y)e^{ikx}.
  \end{align*}
  Since the equation decouples in $k$, we may consider it a fixed parameter and
  interpret this equation as a Galilean transformation $y\mapsto y+\frac{c}{k}$
  of the time-independent setting \eqref{eq:4}.
  \\

  We in particular remark that in this setting instead of stationary streamlines
  $y=0$, streamlines of interest are those which move at a speed
  $y=-\frac{c}{k}$ matching the frequency in time.
\end{rem}

In the present special case of Couette flow, we can compute explicit solutions
and thus show that in both cases stability and scattering to the transport
problem fail in $H^s, s>0$. Moreover, in the first case, stability even fails in
$H^s, s>-1$.
However, (weak) linear inviscid damping, that is (weak) asymptotic stability in $H^{-1}$ holds.
Furthermore, we see that a variant of the resonant case naturally arises in the
study of the consistency problem.

\begin{prop}
  \label{prop:Couettecases}
  Let $\omega_0 \in L^2$ and $f_0 \in L^2$ with $\int f_0 dx =0$.
  Then the explicit solution of \eqref{eq:2} in the resonant case \eqref{eq:3}
  is given by
  \begin{align}
    \omega(t,x,y)=\omega_0(x-ty,y) +t f_0(x-ty,y).
  \end{align}
  In particular, unless $f_0$ has trivial dependence on $x$, it holds that
  \begin{itemize}
  \item The evolution is algebraically unstable in $H^s$ for any $s>-1$.
  \item The evolution is stable in $H^{-1}$ and weakly compact in that space. We
    interpret this as a weak analogue of linear inviscid damping.
  \end{itemize}
 
  If we instead consider the time-independent case \eqref{eq:4} and suppose
  that additionally
  \begin{align}
    \label{eq:5}
    g(x,y):=\frac{1}{y}\p_x^{-1} f_0(x,y) \in L^{2},
  \end{align}
  then the explicit solution of \eqref{eq:2} is given by
  \begin{align*}
    \omega(t,x,y)=\omega_0(x-ty,y) -g(x-ty,y)+g(x,y),
  \end{align*}
  and, unless $g$ has trivial dependence on $x$, it holds that:
  \begin{itemize}
  \item The evolution is algebraically unstable in $H^s$ for any $s>0$.
  \item While stability holds in $L^2$, asymptotic stability or scattering fail.
  \item The evolution is asymptotically stable in $H^s$ for any $s<0$. In
    particular, the associated velocity field strongly converges in $L^2$ as
    $t\rightarrow \infty$.
  \end{itemize}
\end{prop}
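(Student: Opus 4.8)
The plan is to first establish the two explicit solution formulas via Duhamel's principle and then to read off every stability and instability statement from a single computation on the Fourier side. Writing $S(t)h := h(x-ty,y)$ for the solution operator of the free transport equation $\partial_t \omega + y\partial_x\omega = 0$, Duhamel's formula gives $\omega(t) = S(t)\omega_0 + \int_0^t S(t-\tau)f(\tau)\,d\tau$. In the resonant case $f(\tau,x,y)=f_0(x-\tau y,y)$ one checks that $S(t-\tau)f(\tau)=f_0(x-ty,y)$ is independent of $\tau$, so the Duhamel integral collapses to $t\,f_0(x-ty,y)$. In the time-independent case the integrand is $f_0(x-(t-\tau)y,y)$; substituting $s=t-\tau$ and using the hypothesis $f_0 = y\partial_x g$ turns the integrand into the total $s$-derivative of $g(x-sy,y)$, and integrating yields $g(x,y)-g(x-ty,y)$. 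This reproduces both stated formulas and simultaneously re-derives condition \eqref{eq:5} as precisely what is needed to carry out the integration.

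For the quantitative claims I would pass to the Fourier transform in $x$ (discrete frequency $k$) and $y$ (continuous frequency $\eta$), relying on the single key identity that shearing shifts the vertical frequency, $\widetilde{h(\cdot-t\,\cdot,\cdot)}(k,\eta)=\tilde h(k,\eta+kt)$. After the change of variables $\mu=\eta+kt$ the mass of each sheared profile concentrates at $|\eta|\sim|k|t$, where the Sobolev weight behaves like $(1+k^2+\eta^2)^s\sim(|k|t)^{2s}$ for $k\neq0$; the assumption $\int f_0\,dx=0$ removes the $k=0$ mode and so guarantees that this growth is genuine. Taking $\omega_0=0$ for the lower bound, in the resonant case the term $t\,f_0(x-ty,y)$ gives $\|\omega(t)\|_{H^s}\sim t^{1+s}$, which blows up exactly for $s>-1$ and is uniformly bounded at $s=-1$; in the time-independent case the term $g(x-ty,y)$ gives $\|\omega(t)\|_{H^s}\sim t^{s}$, blowing up for $s>0$ while the $L^2$ norm stays bounded by $\|\omega_0\|_{L^2}+2\|g\|_{L^2}$.

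The convergence statements for $s<0$ follow from the same representation together with dominated convergence: for $k\neq0$ and $s<0$ the weight $(1+k^2+(\mu-kt)^2)^s$ tends to $0$ pointwise in $\mu$ and is bounded by $1$, so each sheared profile $h(x-ty,y)$ converges in $H^s$ to its $x$-average $\langle h\rangle_x$. In the time-independent case this gives strong convergence of $\omega(t)$ in every $H^s$, $s<0$, to $\langle\omega_0\rangle_x-\langle g\rangle_x+g$; since this limit is not of transport type and the approach is only weak in $L^2$, both scattering and strong $L^2$ asymptotic stability fail, whereas inviscid damping holds because $v=\nabla^{\perp}\Delta^{-1}\omega$ converges in $L^2$ (using $\|v\|_{L^2}\sim\|\omega\|_{H^{-1}}$ on $k\neq0$ and the conservation of the shear mode $\langle\omega_0\rangle_x$). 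For the endpoint $s=-1$ in the resonant case I would establish weak convergence directly: pairing $t\,f_0(x-ty,y)$ against a fixed test function and applying the frequency shift shows that the growing prefactor $t$ is exactly compensated by the $(|k|t)^{-1}$ gain from the weight (equivalently from $\Delta^{-1}$), while the residual oscillation forces the $k\neq0$ contribution to vanish, leaving only the conserved shear.

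The step I expect to be the main obstacle is precisely this endpoint $s=-1$ analysis in the resonant case, where both uniform boundedness and the identification of the weak limit hinge on the exact cancellation between the linear-in-$t$ amplification of the forcing and the frequency concentration at $|\eta|\sim|k|t$. Making this rigorous requires controlling the balance uniformly in $k$ and first proving the uniform $H^{-1}$ bound, then upgrading weak convergence from a dense set of smooth test functions via that bound; here the hypothesis $\int f_0\,dx=0$, which eliminates the non-decaying $k=0$ mode, is indispensable.
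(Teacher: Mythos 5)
Your derivation of the two solution formulas coincides with the paper's (characteristics plus Duhamel's formula; in the stationary case the antiderivative $g=\frac{1}{y}\p_x^{-1}f_0$ via the fundamental theorem of calculus), but your stability analysis takes a genuinely different route: you read everything off the Fourier-side shift identity $\tilde h(k,\eta+kt)$ and the behavior of the weight $(1+k^2+(\mu-kt)^2)^{s}$, whereas the paper argues in physical space, rewriting in the resonant case
\begin{align*}
t f_0(x-ty,y) = -\frac{d}{dy}\, g(x-ty,y) + (\p_y g)(x-ty,y),
\end{align*}
and then using the characterization of $H^{-1}$ as the dual of $H^1_0$ together with integration by parts to obtain the uniform bound by $\|g\|_{H^1}$, the Riemann--Lebesgue lemma for weak compactness, and the $L^2$ isometry of $(x,y)\mapsto(x-ty,y)$ for the stationary case. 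Two points in your version need care. First, your claim that the prefactor $t$ is ``exactly compensated'' by the $(|k|t)^{-1}$ gain from the weight holds only for $t\gg |\mu|/|k|$: at the resonant times $t\approx \mu/k$ the weight is $O(1)$ while the prefactor is $\mu/k$, so the correct uniform-in-$t$ statement is $\sup_t t^2\bigl(1+k^2+(\mu-kt)^2\bigr)^{-1}\lesssim (1+\mu^2)k^{-2}$, i.e.\ the endpoint bound costs one derivative in $y$. With $f_0$ merely in $L^2$ the uniform $H^{-1}$ bound is \emph{false}; it is the regularity $f_0\in H^1$ built into the resonant setup \eqref{eq:3} (equivalently $g\in H^1$, precisely the quantity the paper's duality argument consumes) that rescues the estimate, and your write-up never identifies where this enters. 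Second, the proposition is set on $\T\times I$, so the continuous-$\eta$ Fourier transform is not directly available: for the upper bounds you should pair against $H^1_0(\T\times I)$ test functions (zero extension is legitimate for $s\le 0$), and for the $s<0$ convergence the cleaner route on the bounded channel is weak $L^2$ convergence via Riemann--Lebesgue combined with compactness of the embedding $L^2\hookrightarrow H^s$ --- in effect what the paper does. What your approach buys in exchange is sharper quantitative information: the explicit rates $t^{1+s}$ and $t^{s}$ and the precise location of the instability thresholds $s=-1$ and $s=0$, which the paper's softer duality argument asserts but does not display.
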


\begin{proof}
  Using the method of characteristics and Duhamel's formula, we obtain that the
  explicit solution of \eqref{eq:2} is given by
  \begin{align}
    \label{eq:6}
    \omega(t,x,y)=\omega_0(x-ty,y) + \int_0^t f(\tau,x-(t-\tau)y,y) d\tau.
  \end{align}
  In the resonant case, the integral simplifies to
  \begin{align}
    \label{eq:7}
    \int_0^t f_0(x-ty,y) d\tau = t f_{0}(x-ty,y).
  \end{align}
  The algebraic instability in $H^s,s>0$ follows immediately from this explicit
  formula. Concerning the behavior in $H^s,s<0$, we note that
  \begin{align*}
    t f_0(x-ty,y)= -\frac{d}{dy} g(x-ty,y) + (\p_yg)(x-ty,y).
  \end{align*}
  Using the characterization of $H^{-1}$ as the dual space of $H^1_0$ and
  integrating by parts, we hence obtain stability in $H^{-1}$ with a bound by $\|g\|_{L^2}+ \|\p_y
  g\|_{L^2}\leq \|g\|_{H^1}$. Furthermore, $g(x-ty,y)$ and $(\p_yg)(x-ty,y)$
  weakly converge to $0$ in $L^2$. Hence, weak compactness in $H^{-1}$ follows
  and $(\p_yg)(x-ty,y)$ even converges strongly in $H^{-1}$. Similarly, the
  algebraic instability of $g(x-ty,y)$ in $H^\sigma, s>0$ yields instability of
  $\omega$ in $H^s, s>-1$.
  \\

  In the time-independent case, the explicit solution is given by
  \begin{align*}
    \omega(t,x,y)=\omega_0(x-ty,y) -g(x-ty,y)+g(x,y)
  \end{align*}
  by the fundamental theorem of calculus. We remark that here the vanishing
  average of $f_0$ in $x$ is necessary to ensure that the integral
  \begin{align*}
    \int_0^t f_0(x-(t-\tau)y,y) d\tau = \int_{x}^{x-ty}\frac{1}{y} f(\xi, y) d\xi
  \end{align*}
  is a well-defined periodic function in $x$. We observe that the change of variables
  $(x,y)\mapsto (x-ty,y)$ is an isometry in $L^{2}$ and hence $\omega$ is stable
  in $L^{2}$ but not asymptotically stable. Furthermore, the evolution is
  algebraically unstable in $H^s,s>0$ and asymptotically stable in $H^s, s<0$.
\end{proof}

We thus see that algebraic instability in $L^2$ and asymptotic stability in
$H^{-1}$ can be compatible. One might object that the choice of ``resonant''
force $f$ is quite artificial. In particular, we note that ``non-resonant''
time-independent (or time-periodic) force fields result in better stability
behavior in Sobolev regularity.

However, such resonant forcing naturally appears in the consistency equation.
More precisely, we have seen that in the case of time-independent forcing,
$\omega$ is of the form
\begin{align*}
  \omega(t,x,y)= \omega_1(x-ty,y) + \omega_2(x,y).
\end{align*}
In the consistency problem we insert this decomposition of $\omega$ into the
nonlinearity $\nabla^{\bot}\phi \cdot \nabla \omega$ and obtain four
corresponding products, some of which will be of resonant type.
\begin{prop}
  \label{prop:couetteconsistency}
  Let $\omega(t,x,y)=\omega_{1}(x-ty,y)+ \omega_{2}(x,y)$, where
  $\omega_1,\omega_2, \p_x \omega_1, \p_x \omega_2 \in H^{3/2} \cap W^{1,\infty}$. We consider the
  Duhamel integral
  \begin{align}
    \label{eq:8}
    \int_{0}^{t} (\nabla^{\bot}\phi \cdot \nabla \omega)(\tau,x+(t-\tau)y, y) d\tau,
  \end{align}
  for the case of a finite periodic channel, $\T \times I$.
  \begin{itemize}
  \item If $\omega_2=0$, the Duhamel integral \eqref{eq:8} is bounded and convergent in $H^s, s<-\frac{1}{2}$.
  \item If $\omega_1=0$, the Duhamel integral is bounded and convergent in $H^s,s<0$.
  \item If $\omega_1, \omega_2 $ are non-trivial, the Duhamel integral is
    bounded in $H^{-1}$ and asymptotically converges weakly. However, it grows
    unbounded in any $H^s$ with $s>-1$.
  \end{itemize}
\end{prop}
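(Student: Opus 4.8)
The plan is to split the stream function according to the two pieces of $\omega$, expand the nonlinearity into four bilinear contributions, and then feed each through the Duhamel integral separately. Write $\phi = \phi_{1} + \phi_{2}$, where $\Delta \phi_{1} = \omega_{1}(x-ty,y)$ and $\Delta \phi_{2} = \omega_{2}(x,y)$, and set $\Omega_{1}(t,x,y) := \omega_{1}(x-ty,y)$. Expanding $\nabla^{\bot}\phi \cdot \nabla \omega$ and writing $\{\psi,\zeta\} := \nabla^{\bot}\psi \cdot \nabla \zeta$ yields
\begin{align*}
  \nabla^{\bot}\phi \cdot \nabla \omega = \{\phi_{1},\Omega_{1}\} + \{\phi_{2},\Omega_{1}\} + \{\phi_{1},\omega_{2}\} + \{\phi_{2},\omega_{2}\}.
\end{align*}
The first observation is structural: $\phi_{2}$ and $\omega_{2}$ are time independent, the velocity $\nabla^{\bot}\phi_{1}$ decays by linear inviscid damping (its two components at the rates $t^{-1}$ and $t^{-2}$), and $\nabla \Omega_{1}$ carries a factor growing linearly in time, since $\p_{y}\Omega_{1} = (\p_{y}\omega_{1})(x-ty,y) - t\,(\p_{x}\omega_{1})(x-ty,y)$. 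The second, organising observation is that in the Duhamel integral the intrinsic transport of a factor combines with the transport in the kernel, so that a transported factor loses its $\tau$-dependence and its argument collapses to $x-ty$, exactly as in the computation underlying Proposition \ref{prop:Couettecases}.

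With these structures in place, the two extreme cases reduce to Proposition \ref{prop:Couettecases}. If $\omega_{1}=0$, only $\{\phi_{2},\omega_{2}\}$ survives; it is time independent and of divergence form, $\{\phi_{2},\omega_{2}\} = \nabla \cdot (\omega_{2}\nabla^{\bot}\phi_{2})$, hence has vanishing $x$-average, so the Duhamel integral is precisely of the time-independent type \eqref{eq:4} and is bounded and convergent in $H^{s}, s<0$. If $\omega_{2}=0$, only $\{\phi_{1},\Omega_{1}\}$ remains; both factors are transported at the same speed, so the product is of transport type with amplitude decaying like $\langle t \rangle^{-1}$ by inviscid damping, and the Duhamel integral converges in $H^{s}$. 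Here the threshold is lowered to $s<-\tfrac12$ rather than $s<0$ by the boundary corrections to $\phi_{1}$ on the finite channel $\T \times I$; these are the low-regularity effects that force the $H^{3/2}$ hypotheses and restrict the available damping estimates (c.f. \cite{Zill5}, \cite{Zill6}).

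The heart of the matter is the mixed case, where the cross term $\{\phi_{2},\Omega_{1}\}$ dominates. Its leading contribution is the secular term $-t\,\p_{x}\phi_{2}\,(\p_{x}\omega_{1})(x-ty,y)$, the product of the time-independent vertical velocity $\p_{x}\phi_{2}$ with the linearly growing $y$-gradient of $\Omega_{1}$. The cancellation of the transport in the kernel, together with the absence of a zero $x$-mode in $\p_{x}\phi_{2}$, limits the resulting secular growth to order $t$: the output is of the form $t\,h(x-ty,y)$ with $h$ built from $\omega_{1},\omega_{2}$, i.e. exactly the resonant profile of the first part of Proposition \ref{prop:Couettecases}. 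Writing $t\,h(x-ty,y)$ as a $y$-derivative modulo lower order terms and using the duality $H^{-1} = (H^{1}_{0})^{*}$ then gives uniform boundedness in $H^{-1}$ together with weak asymptotic convergence, while the explicit factor $t$ at $y$-frequency of size $\sim t$ produces unboundedness in every $H^{s}, s>-1$. The remaining pieces $\{\phi_{1},\omega_{2}\}$, the non-secular part of $\{\phi_{2},\Omega_{1}\}$, and $\{\phi_{1},\Omega_{1}\}$ are oscillatory and decaying, hence convergent in $H^{-1}$ by the estimates of the two previous cases, and do not affect the sharp threshold.

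The main obstacle is twofold. First, one must establish the sharp inviscid-damping behaviour of $\phi_{1} = \Delta^{-1}\Omega_{1}$ at the low regularity $H^{3/2}$, separating the transported bulk part from the slowly decaying boundary corrections; this is what controls $\{\phi_{1},\Omega_{1}\}$ and $\{\phi_{1},\omega_{2}\}$ and fixes the $s<-\tfrac12$ threshold. Second, and most delicate, one must isolate the genuinely resonant (that is, $\tau$-nonoscillatory) part of $\{\phi_{2},\Omega_{1}\}$ from the oscillatory remainder and run the duality and integration-by-parts argument uniformly in $t$, so as to obtain simultaneously the $H^{-1}$ bound with weak convergence and the matching lower bound witnessing unboundedness for all $s>-1$.
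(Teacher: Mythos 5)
Your overall architecture coincides with the paper's: up to the choice of Eulerian rather than Lagrangian coordinates, your four-term bracket expansion is exactly the paper's splitting $\Phi_1=\Delta_t^{-1}\omega_1(x,y)$, $\Phi_2=(\Delta^{-1}\omega_2)(x+ty,y)$ with terms $I$--$IV$, and you correctly identify the cross term ``stationary velocity against transported vorticity gradient'' (the paper's $IV$) as the sole source of growth, with the same mechanism: an explicit factor $t$ carried at $y$-frequency $\sim t$, hence uniform boundedness in $H^{-1}$ by writing the transported factor as a $y$-derivative and dualizing against $H^1_0$, weak convergence, and unboundedness in every $H^s$, $s>-1$. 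One imprecision there: after the integration by parts in $\tau$ the secular output is the \emph{mixed product} $\frac{T}{y}\phi_2(x+Ty,y)\,\p_x\omega_1(x,y)$ as in \eqref{eq:28}, not a pure resonant profile $t\,h(x-ty,y)$; the conclusion is the same, but the duality argument then requires $\p_y\p_x\omega_1\in L^\infty$ so that multiplication by $\p_x\omega_1$ preserves $H^1_0$, a hypothesis the paper explicitly invokes.

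There are, however, two genuine gaps in the terms you dismiss as harmless. First, for $\{\phi_1,\Omega_1\}$ (the paper's $I$, the whole content of the case $\omega_2=0$) you assert velocity decay $\langle t\rangle^{-1}$; under the standing regularity $\omega_1\in H^{3/2}\cap W^{1,\infty}$ the available bound is $\|\nabla^\bot\Delta_t^{-1}\omega_1\|_{L^2}\leq C(1+t)^{-\sigma}\|\omega_1\|_{H^{1+\sigma}}$ with $1+\sigma<\frac{3}{2}$, i.e.\ only $t^{-1/2+}$ --- and even a $t^{-1}$ rate would give $\log T$, not boundedness. The actual source of the $s<-\frac12$ threshold is not ``boundary corrections to $\phi_1$'' but an interpolation exploiting the divergence structure: $\|\nabla^\bot\cdot(\Delta_t^{-1}\omega_1\,\nabla\omega_1)\|_{H^{-1}}\lesssim t^{-3/2}\|\omega_1\|_{H^{3/2}}\|\nabla\omega_1\|_{L^\infty}$ interpolated with the $L^2$ bound yields $t^{-1/2+s}$ for $s\in[-1,0]$, integrable precisely for $s<-\frac12$ (boundary effects enter only indirectly, through the $H^{3/2}$ regularity ceiling); alternatively one uses that $B_T=\int_0^T\Delta_t^{-1}\,dt$ is bounded and convergent on $L^2$. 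Second, the mixed term $\{\phi_1,\omega_2\}$ (the paper's $III$) is \emph{not} covered ``by the estimates of the two previous cases'': its integrand decays only at the non-integrable rate of the $\phi_1$-velocity, and the multiplier of $\nabla_t\Delta_t^{-1}$ by itself suggests logarithmic growth. Convergence requires an integration by parts in $\tau$ against the oscillation of the transported factor via the antiderivative $\frac{1}{y}\p_x^{-1}$, which trades $\nabla_t^\bot\Delta_t^{-1}$ for $\p_t(\nabla_t^\bot\Delta_t^{-1})\approx\p_x\Delta_t^{-1}$ with integrable decay $t^{-3/2+\delta}$ --- a step absent from your sketch. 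Without these two devices the claimed boundedness in the $\omega_2=0$ case and the $H^{-1}$ bound in the mixed case do not follow.
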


\begin{proof}
  We consider the problem in Lagrangian variables and thus
  $W(t,x,y)=\omega(t,x+ty,y)$ can be split as
  \begin{align*}
    W(t,x,y)=\omega_1(x,y)+\omega_2(x+ty,y).
  \end{align*}
  We stress that in the present setting we could restrict to $\omega_1,\omega_2
  \in H^{s}$ for $s$ large in order to allow for easier proofs.
  However, in the general setting of Section \ref{sec:linEuler}, $\omega_1$ and
  $\omega_2$ will turn out to be time-dependent with suitable control only in
  $H^s, s<\frac{3}{2}$  and a bound by $\log(t)$ in $W^{1,\infty}$.
  Our main challenge in the following is thus to handle this lack of higher regularity.

  Expressing the Duhamel integral in terms of $W$, we have to estimate 
  \begin{align}
    \label{eq:25}
    \int_0^T \nabla^\bot \Phi(t, x,y) \cdot \nabla W(t,x,y) d\tau, \\
 \notag \Delta_t \Phi:=  (\p_x^2+(\p_y-t\p_x)^2)\Phi(t,x,y)=W(t,x,y).
  \end{align}
  Splitting the stream function as
  \begin{align*}
    \Phi_1(t,x,y) &= \Delta_t^{-1} \omega_1(x,y), \\
    \Phi_2(t,x,y) &= \Delta_t^{-1}\omega_2(x+ty,y) = (\Delta^{-1}\omega_2)(x+ty,y),
  \end{align*}
  we split \eqref{eq:25} into four terms corresponding to different products.
  \begin{align*}
    & \quad \int_0^T \nabla^\bot \Phi(t, x,y) \cdot \nabla W(t,x,y) d\tau \\
    &=  \int_0^T \nabla^\bot\Delta_t^{-1} \omega_1(x,y)  \cdot \nabla \omega_1(x,y)  d\tau \\
    & \quad +  \int_0^T \nabla^\bot (\Delta^{-1}\omega_2)(x+ty,y) \cdot \nabla \omega_2(x+ty,y)  d\tau \\
    & \quad + \int_0^T \nabla^\bot\Delta_t^{-1} \omega_1(x,y)  \cdot \nabla \omega_2(x+ty,y)  d\tau \\
    & \quad +  \int_0^T \nabla^\bot (\Delta^{-1}\omega_2)(x+ty,y) \cdot \nabla \omega_1(x,y)  d\tau \\
    & = I + II +III + IV.
  \end{align*}
  We remark that if $\omega_1=0$, all terms except $II$ vanish. If $\omega_2=0$,
  all terms accept $I$ vanish. If both are non-trivial additionally $III$ and
  $IV$ are non-trivial. Here, we show $III$ to be bounded in $L^2$, while $IV$
  exhibits growth in $H^s$ for any $s>-1$.
\\

\underline{Ad I}:  We consider 
\begin{align}
  \label{eq:26}
    \int_0^T \nabla^\bot \Delta_t^{-1}\omega_1(x,y) \cdot \nabla \omega_1(x,y) dt.
  \end{align}
  We note that $\nabla^\bot \Delta_t^{-1}\omega_1(x,y)$ asymptotically decays in
  time, but that our bound
  \begin{align*}
    \|\nabla^\bot \Delta_t^{-1}\omega_1(x,y)\|_{L^2} \leq C (1+t)^{-\sigma} \|\omega_1\|_{H^{1+\sigma}}
  \end{align*}
  is not sufficient to obtain an integrable decay rate, since we are limited to
  $1+\sigma < \frac{3}{2}$.

  One way to avoid this obstacle is to consider the asymptotic behavior in
  negative Sobolev regularity. That is, expressing the nonlinearity as a
  divergence, we can interpolate:
    \begin{align*}
    \|\nabla^\bot \cdot  (\Delta_t^{-1}\omega_1(x,y) \cdot \nabla \omega_1(x,y))\|_{H^{-1}_0}&\leq \| \Delta_t^{-1}\omega_1(x,y)\|_{L^2} \|\nabla \omega_1(x,y)\|_{L^\infty}\\
    &\leq C t^{-3/2} \|\omega_{1}\|_{H^{3/2}} \|\nabla \omega_1(x,y)\|_{L^\infty} \\
    \|\nabla^\bot (\Delta_t^{-1}\omega_1(x,y) \cdot \nabla \omega_1(x,y))\|_{L^2} &\leq   C t^{-1/2} \|\omega_{1}\|_{H^{3/2}} \|\nabla \omega_1(x,y)\|_{L^\infty}
    \end{align*}
    to obtain that for $s\in [-1,0]$
    \begin{align*}
      \|\nabla^\bot \cdot  (\Delta_t^{-1}\omega_1(x,y) \cdot \nabla \omega_1(x,y))\|_{H^{s}_0} \leq C t^{-1/2+s} \|\omega_{1}\|_{H^{3/2}} \|\nabla \omega_1(x,y)\|_{L^\infty},
    \end{align*}
    and is hence integrable in time for $s< -\frac{1}{2}$. However, this
    estimate is not optimal.

  As can be seen via characterization in terms of Fourier variables for the
  setting of an infinite channel or by a basis characterization for a finite
  channel (c.f. \cite{Zill5}), the linear operator
  \begin{align*}
  B_T:=  \int_0^T \Delta_t^{-1} dt
  \end{align*}
  is uniformly bounded in $L^2$ and converges as $T \rightarrow \infty$.
  Hence, we may compute \eqref{eq:26} explicitly as
  \begin{align*}
    \nabla \omega_1(x,y) B_T \nabla^\bot\omega_1(x,y).
  \end{align*}

  \underline{Ad II:} We explicitly compute $II$ as 
  \begin{align*}
    & \quad \int_0^T \nabla_t^\bot \Delta_t^{-1} \omega_2(x+ty,y)  \cdot \nabla_t \omega_2(x+ty,y) dt \\
    &= \int_0^T (\nabla^\bot \Delta^{-1} \omega_2 \cdot \nabla \omega_2) (x+ty,y) dt \\
    &= \frac{1}{y}\p_x^{-1} (\nabla^\bot \Delta^{-1} \omega_2 \cdot \nabla \omega_2) (x+ty,y)\bigg|_{t=0}^T.
  \end{align*}
  We in particular note that $II$ consists of one term depending on $(x+Ty,y)$,
  which is hence stable in Lagrangian Sobolev spaces but not Eulerian, and another
  term depending on $(x,y)$, where the converse holds.
  Thus, we emphasize that stability should more naturally be considered in sum
  spaces.
  This concludes the proof in the case where one of $\omega_1$ or $\omega_2$ is
  zero.
  When considering the general case, one additionally has to estimate the
  \emph{mixed} terms $III$ and $IV$.
  \\

  \underline{Ad III:} We express the integral as 
  \begin{align*}
    \int_0^T \nabla_t^\bot \Delta_t^{-1} \omega_1(x,y) \cdot \nabla_t \omega_2(x+ty,y) dx \\
    = \int_0^T \nabla_t^\bot \Delta_t^{-1} \omega_1(x,y) \cdot (\nabla \omega_2) (x+ty,y)
  \end{align*}
  Note that, even in the whole space case, the multiplier associated with
  $\nabla_t \Delta_t^{-1}$ is not integrable in time, but rather suggests a
  logarithmic growth.
  However, we may additionally make use of the oscillation of the second factor
  and compute:
  \begin{align*}
    \nabla_t^\bot \Delta_t^{-1} \omega_1(x,y) \cdot \frac{1}{y}\p_x^{-1}(\nabla \omega_2) (x+ty,y) \bigg|_{t=0}^T \\
    - \int_0^T \p_t(\nabla_t^\bot \Delta_t^{-1}) \omega_1(x,y) \cdot \frac{1}{y}\p_x^{-1}(\nabla \omega_2) (x+ty,y).
  \end{align*}
  We note that $\p_t(\nabla_t^\bot \Delta_t^{-1}) \approx \p_x \Delta_t^{-1}$
  exhibits higher decay rates in $L^2$ when applied to sufficiently regular
  functions. In particular,
  \begin{align*}
    \|\p_t(\nabla_t^\bot \Delta_t^{-1}) \omega_1(x,y)\|_{L^2} \leq C (1+t)^{-3/2+\delta} \|\omega_1\|_{H^{3/2-\delta}}
  \end{align*}
  and hence the latter integral is absolutely convergent in $L^2$.
  \\

  \underline{Ad IV:} We split this contribution as
  \begin{align*}
    \int_0^T \nabla^\bot \Delta_t^{-1} \omega_2(x+ty,y) \nabla \omega_1(x,y) dt\\
    = \int_0^T \nabla^\bot \phi_2(x+ty,y) \cdot \nabla \omega_1(x,y) dt \\
    = \int_0^T (\nabla^\bot \phi_2)(x+ty,y) \cdot \nabla \omega_1(x,y) dt \\
    + \int_0^T t \p_x \phi_2(x+ty,y) \p_x \omega_1(x,y) dt. 
  \end{align*}
  The first integral has an explicit anti-derivative with
  \begin{align}
    \label{eq:27}
    \frac{1}{y}\p_x^{-1}(\nabla^\bot \phi_2)(x+ty,y)\bigg|_{t=0}^T \cdot \nabla \omega_1(x,y). 
  \end{align}
  Similarly, for the second integral we obtain
  \begin{align}
    \label{eq:28}
    \frac{T}{y} \phi_2(x+Ty,y) \p_x \omega_1(x,y) \\
   \notag  - \int_0^T \frac{1}{y} \phi_2(x+ty,y)\p_x \omega_1(x,y) dx.
  \end{align}
  The last integral is then again of the above form and can be computed as
  \begin{align}
    \label{eq:29}
     \frac{1}{y}\p_x^{-1} \phi_2(x+ty,y)\bigg|_{t=0}^T \p_x \omega_1(x,y).
  \end{align}
  We remark that the contributions in \eqref{eq:27} and \eqref{eq:29} are of the
  form
  \begin{align*}
    a_1(x+ty,y) a_2(x,y) + b_1(x,y) b_2(x,y).  
  \end{align*}
  The latter term is time-independent and hence stable, while the first term is
  of a \emph{mixed} or \emph{product} type. In particular, this term is not
  asymptotically stable in $L^2$, neither with respect to Eulerian or
  Lagrangian coordinates, though each factor is asymptotically stable with
  respect to one of the coordinates.
  However, by the Riemann-Lebesgue Lemma the product converges weakly in $L^2$
  as $T \rightarrow \infty$ and strongly in $H^s,s <0$.

  The contribution in \eqref{eq:28}, is seen to be in general algebraically
  unstable in $H^s$, $s>-1$, similarly to the resonant case of Proposition
  \ref{prop:Couettecases}.
  However, we may again note that
  \begin{align*}
    \frac{T}{y} \phi_2(x+Ty,y) = \frac{1}{y^2} (\frac{d}{dy}-\p_y) \p_x^{-1} \phi_2(x+Ty,y),
  \end{align*}
  which can be used to establish uniform boundedness and weak compactness in
  $H^{-1}_0$.
  We remark that, at this point we require that $\p_y \p_x
  \omega_1(x,y) \in L^\infty$, so that $\p_x \omega_1(x,y) \psi \in H^{1}_0$,
  whenever $\psi \in H^{1}_0$.
   
\end{proof}

\section{The Forced Linear Euler Equations}
\label{sec:linEuler}

Following this preparatory example, let us now consider the case of a more
general shear flow.
\begin{align}
  \label{eq:20}
  \begin{split}
    \dt \omega + U(y)\p_{x} \omega - U''(y)\p_{x}\phi&= f, \\
    \Delta \phi &= \omega,
  \end{split}
\end{align}

We denote the solution operator of the unforced linearized Euler equations in
Lagrangian coordinates by $S(t_1,t_2)$.
\begin{defi}
  Let $U(y)\in C^{3}$ be given and define $S(t_{2},t_{1}):L^{2}\rightarrow
  L^{2}$ to be the solution operator of the \emph{unforced scattering problem}.
  That is, consider the case $f=0$ and define $W(t,x,y):=\omega(t,x+tU(y),y)$.
  Then $W$ satisfies equation
  \begin{align}
    \label{eq:18}
    \dt W - U'' \p_{x} (\p_{x}^{2}+(\p_{y}-tU'(y)\p_{x})^{2})^{-1} W =0.
  \end{align}
  We define the associated \emph{solution operator} $S$ by
  $S(t_{2},t_{1})W(t_{1})=W(t_{2})$.
\end{defi}
We stress that establishing the stability of $S(t_{2},t_{1})$ in Sobolev spaces
$H^{s}$ is one of the main results and challenges of works on linear inviscid
damping (c.f. \cite{CZZ17}, \cite{Zill3}, \cite{Zill6}, \cite{wei2017linear},
\cite{wei2018linear}). In this work we build up on these results to study the forced equations.
In particular, we make use of the following properties of $S(t_1,t_2)$, which are for
instance established for a family bilipschitz flows in \cite{Zill5} and for more
general flows in \cite{Zhang2015inviscid}:
\begin{align}
  \label{eq:9}
  \|S(t_{2},t_{1})\|_{H^{s}\rightarrow H^{s}} &\leq C_{s} \text{ for } 0\leq t_{1}\leq t_{2} < \infty, s \in (-\frac{3}{2},\frac{3}{2}), \\
  \label{eq:10}
  \|(\p_{t}S(t,0)) u \|_{H^{s}} &\leq C_{s} (1+|t|)^{-\sigma}\|S(t,0) u \|_{H^{s+\sigma}} \text{ for }s \in (-3,3), \sigma \in (0,2],\\
  \label{eq:11}
  S(t_{2},t_{1}) f(x+\tau U(y),y) &= (S(t_{2}-\tau,t_{1}-\tau)f(x,y)) (x+\tau U(y),y).
\end{align}
We remark that \eqref{eq:9} in the above cited works is proven for $0\leq s <
\frac{3}{2}$ with the upper bound being sharp due to boundary effects.
The estimates for the spaces $H^s_0,s<0$ follow by duality.
Furthermore, we for simplicity assume for the remainder of this section that $I$ is chosen in such a way that
\begin{align}
U(y)\text{ is bounded above and below on }I,
\end{align}
in order to avoid discussion of stability in degenerately weighted spaces (c.f. \cite{CZZ17}).
We note that \eqref{eq:11} is a consequence of the structure of the equation, since
all coefficient functions depend on time by conjugation with the transport
dynamics. The other two properties quantify stability of the unforced problem in
Sobolev regularity as well as the damping rates of the associated velocity
field.
\\
  
We obtain the following integral formula for the solution to the forced
problem by applying Duhamel's principle.
\begin{lem}
  The solution of the forced scattering equation
  \begin{align}
    \label{eq:12}
    \begin{split}
      \dt W - U'' \p_{x} (\p_{x}^{2}+(\p_{y}-tU'(y)\p_{x})^{2})^{-1} W &= f(t,x-tU(y),y), \\
      W|_{t=0}&=\omega_{0}
    \end{split}
  \end{align}
  is given by
  \begin{align}
    \label{eq:13}
    \begin{split}
    W(t)& = S(t,0)\omega_{0} + \int_{0}^{t} S(t,\tau) f(\tau, x-\tau U(y),y) d\tau \\
    &=S(t,0)\omega_{0} + \int_{0}^{t} (S(t-\tau,0) f(\tau,\cdot,\cdot))(x-\tau y, y) d\tau
    \end{split}
  \end{align}
\end{lem}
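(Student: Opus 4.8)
The plan is to read \eqref{eq:12} as a linear inhomogeneous evolution equation
\[
\dt W = A(t) W + g(t), \qquad W|_{t=0} = \omega_0,
\]
with generator $A(t) := U'' \p_x (\p_x^2 + (\p_y - tU'(y)\p_x)^2)^{-1}$ and inhomogeneity $g(t) := f(t, x - tU(y), y)$, and to obtain \eqref{eq:13} by the variation-of-constants (Duhamel) formula, using the covariance identity \eqref{eq:11} to pass from the first displayed expression to the second. Since the solution operator $S$ and its mapping properties \eqref{eq:9}--\eqref{eq:11} are already in hand, the content of the lemma is essentially a formal application of Duhamel's principle together with careful tracking of the Lagrangian shift.

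First I would record that $S$ is a genuine two-parameter evolution family. For each fixed $t$ the operator $A(t)$ is the product of multiplication by $U'' \in C^{1}$ with the order $(-1)$ multiplier $\p_x(\p_x^2 + (\p_y - tU'(y)\p_x)^2)^{-1}$, hence bounded on $L^2$ and on every admissible $H^s$, with continuous dependence on $t$. Consequently the family $S(t_2,t_1)$ from the definition of $S$ satisfies $S(t,t) = \mathrm{Id}$, the composition law $S(t_2,t_1)S(t_1,t_0) = S(t_2,t_0)$, and $\dt S(t,\tau) = A(t) S(t,\tau)$; together with the uniform bound \eqref{eq:9} these are exactly the ingredients that legitimize the Duhamel representation.

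Next I would verify the first line of \eqref{eq:13} by differentiating the candidate $W(t) = S(t,0)\omega_0 + \int_0^t S(t,\tau) g(\tau)\,d\tau$. The homogeneous term contributes $A(t)S(t,0)\omega_0$; the endpoint $\tau = t$ of the integral contributes $S(t,t)g(t) = g(t)$; and differentiating the kernel under the integral contributes $\int_0^t A(t) S(t,\tau) g(\tau)\,d\tau$. Summing these gives $\dt W = A(t)W + g(t)$ with $W(0) = \omega_0$, i.e.\ \eqref{eq:12}. Because $A(t)$ is bounded and $g \in H^1_{loc}(\R_{+}, H^1)$, the map $\tau \mapsto S(t,\tau)g(\tau)$ is continuous into $L^2$ and \eqref{eq:9} makes it integrable on $[0,t]$, so the integral converges and differentiation under the integral sign is justified.

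Finally, for the second line I would move the Lagrangian transport shift through the solution operator by means of \eqref{eq:11}. Writing the inhomogeneity at time $\tau$ as $h_\tau(x + sU(y), y)$ with $h_\tau := f(\tau, \cdot, \cdot)$ and the shift $s$ dictated by the definition $W(t) = \omega(t, x + tU(y), y)$, identity \eqref{eq:11} with $(t_2,t_1) = (t,\tau)$ decreases both time arguments of $S$ by $s$; taking $s = \tau$ collapses $S(t,\tau)$ to $S(t-\tau,0)$ and yields $S(t,\tau)[h_\tau(\cdot + \tau U(y), y)] = (S(t-\tau,0) h_\tau)(x + \tau U(y), y)$. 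Inserting this into the Duhamel integral produces the second displayed form. I expect the one genuinely delicate point to be exactly this bookkeeping: \eqref{eq:11} must be applied with the shift carried by the forcing so that the two time arguments of $S$ decrease in step, and the sign of the Lagrangian shift must be tracked consistently throughout so that $S(t,\tau)$ indeed reduces to $S(t-\tau,0)$. Everything else is the textbook variation-of-constants argument, made rigorous by the uniform bound \eqref{eq:9}.
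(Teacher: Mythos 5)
Your proposal is correct and follows essentially the same route as the paper: verify the variation-of-constants formula by differentiating the candidate, using that $(\p_t - L_t)S(t,\tau)u = 0$ and $S(t,t)=\mathrm{Id}$, and then reduce $S(t,\tau)$ to $S(t-\tau,0)$ via the conjugation/time-shift identity \eqref{eq:11}. Your sign bookkeeping (carrying the shift as $+\tau U(y)$ so that \eqref{eq:11} with $s=\tau$ collapses the two time arguments in step) is in fact the self-consistent version; the $x-\tau U(y)$ printed in \eqref{eq:12}--\eqref{eq:13} is a sign-convention slip in the paper rather than a gap in your argument.
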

We remark that $f(t,x,y)$ denotes the forcing in Eulerian variables and
\eqref{eq:12} is stated in Lagrangian variables. 

\begin{proof}
  Let $L_t$ denote the operator
  \begin{align*}
  L_t u= - U'' \p_{x} (\p_{x}^{2}+(\p_{y}-tU'(y)\p_{x})^{2})^{-1}u.
  \end{align*}
  Then it holds that
  \begin{align*}
    (\p_t- L_t)S(t,\tau) u =0
  \end{align*}
  for any $t,\tau \in \R$ and $u \in L^2$.
  Hence, it follows by direct computation that
  \begin{align*}
    & \quad (\p_t- L_t) \left( S(t,0)\omega_{0} + \int_{0}^{t} S(t,\tau) f(\tau, x-\tau U(y),y) d\tau  \right) \\
    &= 0 + \int_0^t 0 d\tau + S(0,0)f(t,x-t U(y),y) = f(t,x-t U(y),y).
  \end{align*}
  Concerning the interaction with the transport problem, we note that the
  time-dependence of all coefficient functions and the elliptic operator is given
  by conjugation with the transport operator, which is also formulated in
  \eqref{eq:11}. Hence, this conjugation is equivalent to a time-shift of the
  solution operator $S(\cdot, \cdot)$.
\end{proof}

We note that the stability and asymptotic behavior of $S(t,0)\omega_{0}$ has
been intensively studied in prior works on the unforced problem. In the
following we hence restrict to the case $\omega_{0}=0$ and studying the Duhamel
integral. Furthermore, we note that the problem decouples with respect to
Fourier modes $k$ in $x$ and thus consider
\begin{align}
  \begin{split}
    & \quad \int_{0}^{t} S(t,\tau) e^{iktU(y)}\hat{f}(\tau,k,y)e^{ikx} d\tau \\
    &= \int_{0}^{t} e^{ik\tau U(y)} S(t-\tau,0) \hat{f}(\tau,k,y) e^{ikx} d\tau,
  \end{split}
\end{align}
where $k \in \Z \setminus \{0\}$ is arbitrary but fixed.

Following a similar strategy as in Section \ref{sec:couette}, we are interested
in the behavior of \eqref{eq:13} for the following model cases of forcing.
\begin{defi}[Model cases]
  \label{defi:modelcases}
  Let $S(t,\tau)$ denote the solution operator of the unforced problem and let
  $f(t,x,y)$ be a given forcing. We then introduce the following model cases:
\begin{itemize}
\item We call the forcing $f$ \emph{resonant}, if there exists $f_0$
  such that $f(t,x,y)= (S(t,0)f_0)(x+ty,y)$. 
\item We call the forcing $f$ \emph{stationary}, if there exists $f_0$
  such that $f(t,x,y)=(S(0,-t)f_0)(x,y)$. 
\end{itemize}
\end{defi}

Similarly as in Section \ref{sec:couette}, we observe that the at first sight
more general settings of $f(t,x,y)$ or $f(t,x+tU(y),y)$ being periodic can be
reduced to these cases. Following the same Fourier decomposition argument we may
restrict our discussion to forces $e^{ict}f_{c}(x,y)$ or
$e^{ict}f_{0}(x+tU(y),y)$ and after applying a Galilean transform we may further
reduce to $c=0$.
Now note that
\begin{align*}
  f(x,y)= S(t,0) (S(0,t)f(x,y))
\end{align*}
and that $f_{0}(t,x,y):=(S(0,t)f(x,y))$ enjoys the same $H^{s}$ regularity as $f$ by
\eqref{eq:9} and that $\p_{t} f_{0}$ decays in $L^{2}$ with algebraic
rates by property \eqref{eq:10}.
Hence, these cases are very slight generalizations of Definition
\ref{defi:modelcases} with $f_{0}$ depending on time, as discussed in  Proposition \ref{prop:conistency}.
The following proposition considers the case with $f_{0}$ independent of time to
allows for a more transparent characterization via Duhamel's formula.

\begin{prop}
  \label{prop:Duhamel}
  Let $\omega_0 \in L^2$ and suppose that $f$ is in the resonant case
  with $f_0 \in H^1$.
  Then the explicit solution of \eqref{eq:20} is given by
  \begin{align}
    \omega(t,x,y)=(S(t,0)\omega_0)(x-ty,y) +t (S(t,0) f_0)(x-ty,y).
  \end{align}
  In particular, unless $f_0$ has trivial dependence on $x$, it holds that
  \begin{itemize}
  \item The evolution is algebraically unstable in $H^s$ for any $s>-1$.
  \item The evolution is stable in $H^{-1}$ and weakly compact in that space. We
    interpret this as a weak analogue of linear inviscid damping.
  \end{itemize}
 
  If we instead consider the case where $f$ is in the stationary setting and $\int f_0
  dx =0$, then the
  explicit solution of \eqref{eq:20} is given by
  \begin{align}
    \label{eq:22}
    \omega(t,x,y)=(S(t,0)(\omega_0-g))(x-ty,y) +S(0,-t)g(x,y),
  \end{align}
  where $g$ solves
  \begin{align*}
    U(y)\p_x g (x,y)=f_0(x,y).
  \end{align*}
  Hence, it holds that
  \begin{itemize}
  \item The evolution is algebraically unstable in $H^s$ for any $s>0$.
  \item While stability holds in $L^2$, asymptotic stability or scattering fail.
  \item The evolution is asymptotically stable in $H^s$ for any $s<0$. In
    particular, the associated velocity field strongly converges in $L^2$ as
    $t\rightarrow \infty$.
  \end{itemize}
\end{prop}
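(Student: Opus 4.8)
The plan is to read off both explicit solution formulas directly from the Duhamel representation \eqref{eq:13}, in complete parallel with the Couette computation of Proposition \ref{prop:Couettecases}; the only new features are that the free transport flow is replaced by the scattering evolution $S(t,0)$ and that one must commute $S$ through the $y$-dependent $x$-shifts using property \eqref{eq:11} together with the propagator identity $S(t,\tau)S(\tau,\rho)=S(t,\rho)$. I first treat the resonant case. The defining shift of a resonant $f$ is arranged precisely to cancel the transport shift along characteristics, so that the Lagrangian forcing in \eqref{eq:13} reduces to $S(\tau,0)f_0$; applying the propagator $S(t,\tau)$ and collapsing $S(t,\tau)S(\tau,0)=S(t,0)$ turns the integrand into the $\tau$-independent function $S(t,0)f_0$. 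The time integral is therefore simply $t\,S(t,0)f_0$, and undoing the Lagrangian change of variables $x\mapsto x-tU(y)$ produces $\omega(t)=(S(t,0)\omega_0)(x-tU(y),y)+t(S(t,0)f_0)(x-tU(y),y)$, the claimed formula with the characteristic factor $t$.

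For the stationary case the operator bookkeeping is heavier. Inserting $f(\tau)=S(0,-\tau)f_0$ into \eqref{eq:13}, the propagator identity collapses $S(t,\tau)S(0,-\tau)$, and commuting the surviving solution operator past the characteristic shift via \eqref{eq:11} rewrites the integrand as $S(t,0)$ applied to a pure characteristic shift of $f_0$. Pulling the $\tau$-independent operator $S(t,0)$ out of the integral leaves $S(t,0)\int_0^t (\text{shift}_\tau f_0)\,d\tau$. Here the hypothesis $\int f_0\,dx=0$ lets me introduce $g$ with $U(y)\p_x g=f_0$, i.e.\ $g=\tfrac1U\p_x^{-1}f_0$, so the fundamental theorem of calculus evaluates the time integral as the difference of $g$ at the two endpoints, exactly as in Proposition \ref{prop:Couettecases}. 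Applying $S(t,0)$ and commuting it once more through the shift with \eqref{eq:11} (which converts $S(t,0)$ acting on the shifted $g$ into $S(0,-t)g$ up to a shift), the contributions reorganize into the transport-mixed piece $S(t,0)(\omega_0-g)$ and the unmixed stationary piece $S(0,-t)g$; undoing the Lagrangian substitution gives exactly \eqref{eq:22}.

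With the formulas in hand, the (in)stability claims follow from the same mechanisms as for Couette, now quantified by \eqref{eq:9}–\eqref{eq:11}. The key estimate is that for a family $h_t$ uniformly bounded (and bounded below, which holds for $h_t=S(t,0)f_0$ since $S$ is invertible with $\|S(0,t)\|_{L^2\to L^2}\le C$), the mixed function $h_t(x-tU(y),y)$ obeys $\|h_t(x-tU(y),y)\|_{H^s}\sim t^{s}$ on each fixed $x$-frequency, because $U'$ is bounded away from zero and the generated $y$-frequencies grow like $tU'$. Thus the resonant term $t\,h_t(x-tU(y),y)$ grows like $t^{1+s}$ and is unstable for every $s>-1$, while at the endpoint $s=-1$ I write it in divergence form $-\frac{d}{dy}[g_t(x-tU(y),y)]+(\p_y g_t)(x-tU(y),y)$ with $g_t=\tfrac1{U'}\p_x^{-1}S(t,0)f_0$; now $f_0\in H^1$ together with \eqref{eq:9} keeps $g_t$ and $\p_y g_t$ bounded in $L^2$, so by duality with $H^1_0$ the expression is bounded in $H^{-1}$, and both factors converge weakly to zero by the Riemann–Lebesgue/mixing argument, yielding the $H^{-1}$-stability and weak compactness. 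In the stationary case the same estimate shows $(S(t,0)(\omega_0-g))(x-tU(y),y)$ is an $L^2$-bounded, $H^s$-unstable ($s>0$) mixing term that tends to zero strongly in $H^s$ for $s<0$, whereas $S(0,-t)g$ carries no shift, stays bounded, and—by the decay of $\p_t S(0,-t)g$ from \eqref{eq:10}—is Cauchy and converges strongly in $H^s$, $s<\tfrac32$. Adding the two gives $L^2$-stability without asymptotic stability, and strong asymptotic stability in $H^s$, $s<0$ (hence $L^2$-convergence of the velocity after one elliptic gain of regularity).

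I expect the principal obstacle to be the stationary derivation: tracking the two-parameter operator $S(t,\tau)$ correctly through the $y$-dependent shifts so that \eqref{eq:13} genuinely telescopes into the transport-like plus stationary-like decomposition \eqref{eq:22}, rather than into spurious operators such as $S(t+\tau,2\tau)$ that appear if \eqref{eq:11} is applied with the wrong shift. A secondary difficulty is making the rate $\|h_t(x-tU(y),y)\|_{H^s}\sim t^{s}$ rigorous for negative $s$, where the $H^s$ norm is sensitive to possible mass at low $y$-frequencies; this is exactly where the non-degeneracy of $U'$ (and, for $g$, of $U$) is essential, since via non-stationary phase it guarantees that the generated $y$-frequencies remain of order $t$ and no mass accumulates at low $y$-frequencies. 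Everything else reduces to the one-dimensional, mode-by-mode estimates already carried out for Couette in Proposition \ref{prop:Couettecases}.
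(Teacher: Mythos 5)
Your derivation of the two solution formulas is exactly the paper's proof: you start from the Duhamel representation \eqref{eq:13}, use the shift-cancellation along characteristics plus the propagator identity $S(t,\tau)S(\tau,0)=S(t,0)$ in the resonant case, and in the stationary case commute through the shift via \eqref{eq:11}, pull out $S(t,0)$, and apply the fundamental theorem of calculus with $U(y)\p_x g=f_0$, returning to Eulerian variables via $S(t,0)g(x-tU(y),y)=(S(0,-t)g)(x-tU(y),y)$. The paper's proof stops at the formulas, leaving the (in)stability bullets implicit by analogy with Proposition \ref{prop:Couettecases}, so your additional divergence-form and mixing estimates merely spell out that template in more detail (your only unsupported ingredient being uniform boundedness of the backward operator $S(0,t)$, which \eqref{eq:9} states only for $t_1\leq t_2$, a point the paper itself does not address).
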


\begin{proof}
  Recall that the explicit solution in Lagrangian variables is given by \eqref{eq:13}:
  \begin{align*}
     W(t)& = S(t,0)\omega_{0} + \int_{0}^{t} S(t,\tau) f(\tau, x-\tau U(y),y) d\tau.
  \end{align*}
  In the resonant case we hence obtain that
  \begin{align*}
    W(t)& = S(t,0)\omega_{0} + \int_{0}^{t} S(t,\tau) S(\tau,0) f_0(x,y) d\tau \\
        &= S(t,0)\omega_{0} + \int_{0}^{t} S(t,0)f_0(x,y) d\tau \\
        &=  S(t,0)\omega_{0} + t  S(t,0)f_0(x,y).
  \end{align*}
  Similarly, in the stationary case we obtain that
  \begin{align*}
    W(t)& = S(t,0)\omega_{0} + \int_{0}^{t} S(t,\tau) (S(0,-\tau) f_0) ( x-\tau U(y),y) d\tau \\
        & = S(t,0)\omega_{0} + \int_{0}^{t} S(t,\tau) (S(\tau,0) f_0( x-\tau U(y),y)) d\tau \\
        &=  S(t,0)\omega_{0} + S(t,0) \int_{0}^{t} f_0( x-\tau U(y),y) d\tau \\
        &= S(t,0)(\omega_{0}-g) + S(t,0) g(x-tU(y),y)
  \end{align*}
  The claimed solution formulas then follow by the change to Eulerian variables
  and noting that $S(t,0) g(x-tU(y),y) = (S(0,-t)g)(x-tU(y),y)$.
\end{proof}

\subsection{Consistency}
\label{sec:consistency}

When considering the consistency problem, similarly as in Section
\ref{sec:couette}, we encounter terms which we interpret as further forcing.
However, unlike in the Couette flow case, the solution operators $S(t,\tau)$ are
generally not compatible with products. Hence, these contributions are similar
to the resonant and stationary cases discussed above, but also include explicit
time-dependences.

\begin{prop}
  \label{prop:conistency}
  Let $\omega(t), \phi(t)$ denote the solution of the linearized forced Euler
  equations \eqref{eq:20}, where $f$ is in the stationary case.
  Further assume that $\omega_0, g, \p_x \omega_0, \p_x g \in
  H^{\frac{3}{2}}\cap W^{1,\infty}$.
  Then the Duhamel integral
  \begin{align*}
    \sigma(t,x+tU(y),y):= \int_0^t S(t,\tau)(\nabla^\bot \phi \cdot \nabla \omega)(\tau,x-\tau U(y), y) d\tau
  \end{align*}
  satisfies the following properties:
  \begin{enumerate}
  \item If either $g=0$ or $\omega_0=g$, then $\sigma$ is asymptotically stable
    in $H^s$ for any $s<-\frac{1}{2}$.
  \item Generally, $\sigma(t,x+tU(y),y)$ is uniformly bounded in $H^{-1}$ and
    weakly asymptotically convergent, but
    not in $H^s, s>-1$.
  \end{enumerate}
\end{prop}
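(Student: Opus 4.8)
The plan is to reduce the statement to the structure already worked out for Couette flow in Proposition \ref{prop:couetteconsistency}, replacing the explicit Fourier and $\Delta_t^{-1}$ computations throughout by the abstract stability and decay properties \eqref{eq:9}--\eqref{eq:11} of the solution operator $S$. Starting from the decomposition \eqref{eq:22} of Proposition \ref{prop:Duhamel}, I would write $\omega=\omega_1+\omega_2$, where in Lagrangian variables the transport-like part $\omega_1$ carries the amplitude $S(t,0)(\omega_0-g)$ (unshifted) and the stationary-like part $\omega_2$ carries the amplitude $S(0,-t)g$ (shifted by $x+tU(y)$), and correspondingly split the stream function $\phi=\phi_1+\phi_2$. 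Inserting this into $\nabla^\bot\phi\cdot\nabla\omega$ produces the same four products $I,II,III,IV$ as in the Couette case. The two special cases in assertion (1) are then immediate: $g=0$ annihilates $\omega_2$ and leaves only $I$, while $\omega_0=g$ annihilates $\omega_1$ and leaves only $II$; since $I$ converges for $s<-\tfrac{1}{2}$ and $II$ for $s<0$, the common range $s<-\tfrac{1}{2}$ covers both.

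For the diagonal terms I would argue as in ``Ad I'' and ``Ad II'' of Proposition \ref{prop:couetteconsistency}. In $I$ both factors are transport-like; the associated velocity $\nabla^\bot\phi_1$ decays by \eqref{eq:10}, but with the regularity ceiling $\omega_0-g\in H^{3/2}$ this yields only $t^{-1/2}$ in $L^2$ and $t^{-3/2}$ in $H^{-1}$, so interpolation gives an integrable $t^{-1/2+s}$ rate precisely for $s<-\tfrac{1}{2}$. In $II$ both factors are stationary-like, and I would integrate by parts in $\tau$, using that differentiating the shift $x-\tau U(y)$ produces the factor $U(y)\p_x$ that the antiderivative $\tfrac{1}{U(y)}\p_x^{-1}$ inverts; the resulting boundary terms are stable in the transport/stationary sum space. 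The genuinely new feature here, absent in the Couette setting, is that the amplitudes $S(t,0)$ and $S(0,-t)$ are themselves time-dependent; the extra contributions in which $\p_\tau$ lands on these amplitudes are controlled by the gained decay in \eqref{eq:10} together with the uniform bound \eqref{eq:9}, while \eqref{eq:11} is used to commute the outer $S(t,\tau)$ past the shift.

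The mixed terms carry the essential content. For $III$, the velocity from the transport part against the gradient of the stationary part, I would again integrate by parts in $\tau$ exploiting the oscillation of the stationary factor; the derivative landing on the transport velocity gains the improved rate $(1+t)^{-3/2+\delta}$ from \eqref{eq:10} (with $\delta>0$ afforded by the room in $H^{3/2-\delta}$), making the remaining integral absolutely convergent in $L^2$. For $IV$, the stationary velocity against the gradient of the transport part, I would reproduce the splitting \eqref{eq:27}--\eqref{eq:29}: one piece is a product of a transport-shifted and a stationary factor, which converges weakly in $L^2$ and strongly in $H^s,s<0$ by Riemann--Lebesgue, while the decisive piece is the resonant boundary term of the form $\tfrac{T}{U(y)}\phi_2(x+TU(y),y)\,\p_x\omega_1(x,y)$. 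This grows linearly in $T$ and is therefore algebraically unstable in every $H^s,s>-1$. To see it is nonetheless bounded and weakly convergent in $H^{-1}$, I would rewrite $\tfrac{T}{U(y)}\phi_2$ as a total $y$-derivative of an $L^2$ function (minus a bounded remainder) exactly as in \eqref{eq:28}, test against $H^1_0$, and integrate the total derivative by parts. Combining the four contributions yields assertion (2).

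The main obstacle is the low-regularity ceiling $H^{3/2}$: it is what caps the velocity decay at a non-integrable rate in $L^2$, forcing the whole analysis into negative Sobolev spaces, and it is the reason the abstract estimate \eqref{eq:10} must be applied with the sharp exponent $-3/2+\delta$ to make $III$ converge. Technically the most delicate step is the $H^{-1}$ control of the resonant term in $IV$: the duality argument requires that multiplication by $\p_x\omega_1$ map $H^1_0$ into itself, and this is exactly where the hypothesis $\p_x\omega_1\in W^{1,\infty}$ (so that $\p_y\p_x\omega_1\in L^\infty$) is consumed, as already flagged at the end of the proof of Proposition \ref{prop:couetteconsistency}.
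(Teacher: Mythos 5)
Your framework is the paper's: the same splitting $\omega=\omega_1+\omega_2$ from \eqref{eq:22} with time-dependent amplitudes $S(t,0)(\omega_0-g)$ and $S(0,-t)g$, the same four terms $I$--$IV$, and your treatments of $I$, $II$, $III$ and of the resonant boundary term in $IV$ track the paper's argument closely, including the interpolation giving the integrable rate for $s<-\tfrac12$ in $I$, the integration by parts in $\tau$ with amplitude-derivative corrections in $II$ and $III$, and the $H^{-1}$ duality argument for the linearly growing boundary term that consumes $\p_x\omega_1\in W^{1,\infty}$. (One harmless imprecision: in the general setting the paper only obtains $II$ convergent in $H^s$, $s<-\tfrac12$, not $s<0$ as in Couette; assertion (1) needs no more.)

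There is, however, a genuine gap in your treatment of $IV$, which is where the paper's main new work lies. You assert that the corrections in which $\p_\tau$ lands on the amplitudes are ``controlled by the gained decay in \eqref{eq:10} together with the uniform bound \eqref{eq:9}.'' For $IV$ this is false as stated. The $\dot S(t,\tau)$ correction produced by your integration by parts requires bounding $\int_0^t \| \Delta_\tau^{-1}\bigl(\nabla^\bot \tfrac{1}{U(y)}\p_x^{-1}(\Delta^{-1}\omega_2)(\tau,x+\tau U(y),y)\cdot\nabla\omega_1(\tau,x,y)\bigr)\|_{H^{-1}}\,d\tau$, and the operator norm of $\Delta_\tau^{-1}:L^2\to H^{-1}$ decays only like $(1+\tau)^{-1}$, so this route yields $\|IV\|_{H^{-1}}\leq C\log(2+t)$, not a uniform bound. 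The paper closes this with an idea absent from your proposal: a frequency-space resonance analysis, projecting onto $\Omega_1(\tau)=\{(k,\eta):|\eta|\geq c\tau\}$ and its complement; the high-frequency pieces gain $\tau^{-2}$ (via $\|\Delta^{-1}P_{\Omega_1}\omega_2\|_{L^2}\leq C\tau^{-2}\|P_{\Omega_1}\omega_2\|_{L^2}$) or $\tau^{-3/2+\delta}$, while for the product of the two low-frequency pieces one uses that \emph{both} factors have vanishing $x$-average, so the product sits at frequencies with $|\eta+\xi-(k+l)\tau|\geq(1-2c)\tau$, i.e.\ is non-resonant, and $\Delta_\tau^{-1}$ again gains $\tau^{-2}$. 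Likewise, the correction with $\nabla\dot\omega_1$ admits only a $\tau^{-1}$ bound even for arbitrarily regular $\omega_1$, so \eqref{eq:10} alone cannot save it; the paper must integrate by parts in $\tau$ a \emph{second} time, producing $\tfrac{1}{U(y)^2}\p_x^{-2}$ and $\ddot\omega_1$, whose $\tau^{-3/2+\delta}$ decay is integrable. Without these two devices your argument proves only a logarithmically growing $H^{-1}$ bound, not assertion (2).
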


\begin{proof}
  We recall that $\omega(t,x,y)$ is explicitly given by \eqref{eq:22}
  \begin{align*}
    \omega(t,x,y)=(S(t,0)(\omega_0-g))(x-tU(y),y) +S(0,-t)g(x,y),
  \end{align*}
  which, in analogy to Proposition \ref{prop:couetteconsistency}, we rewrite as 
  \begin{align*}
    \omega(t,x,y) &= \omega_1 (t, x-tU(y),y) + \omega_2(t,x,y) \\
    \Leftrightarrow W(t,x,y)&= \omega_1(t,x,y) + \omega_2(t,x+tU(y),y). 
  \end{align*}
  We recall that by properties \eqref{eq:9} and \eqref{eq:10}, the time
  derivatives of $\omega_{1}(t,x,y)$ and $\omega_{2}(t,x,y)$ decay in $L^{2}$
  with algebraic rates, depending on the regularity of $\omega_0-g$ and $g$.

  Following a similar strategy as in Section \ref{sec:couette}, we further
  split the stream function as 
  \begin{align*}
    \phi(t,x+tU(y),y)= \phi_1(t,x,y) + \phi_2(t,x+tU(y),y), 
  \end{align*}
  where $\phi_1,\phi_2$ are the solutions of
  \begin{align*}
    (\p_x^2+(\p_y-tU'(y)\p_x)^2)\phi_1&=\omega_1, \\
    \Delta \phi_2&= \omega_2.
  \end{align*}
  We note that $\phi_2$ asymptotically decays in $L^2$ by inviscid damping and
  does so with algebraic rates by \eqref{eq:9} and \eqref{eq:10}.
  In contrast $\phi_1$ is not expected to decay but rather behave similar to a
  stationary function.

  Using this splitting we decompose $\sigma(t)$ into four terms:
  \begin{align}
    \begin{split}
    \sigma(t,x+tU(y),y)&= \int_0^tS(t,\tau)\left(\nabla^\bot \phi_1(\tau, x,y) \cdot  \nabla \omega_1(\tau, x, y)\right) d\tau \\
                 & \quad + \int_0^tS(t,\tau)\left(\nabla^\bot \phi_2(\tau, x+\tau U(y),y) \cdot  \nabla\omega_2(\tau, x+\tau U(y), y)\right) d\tau \\
                 & \quad + \int_0^tS(t,\tau)\left(\nabla^\bot_\tau \phi_1(\tau, x, y) \cdot  \nabla_\tau \omega_2(\tau, x+\tau U(y), y)\right) d\tau \\
                 & \quad + \int_0^tS(t,\tau)\left(\nabla^\bot \phi_2(\tau, x+\tau U(y),y) \cdot  \nabla\omega_1 (\tau,x, y)\right) d\tau \\
                 &=: I + II + III + IV.
    \end{split}
  \end{align}
  It remains to show that these four terms behave similarly as in the Couette
  case of Proposition \ref{prop:Couettecases}.
  Compared to that setting, we here additionally have to control corrections due to
  $S(t,\tau)$ and the time-dependence of $\omega_1,\omega_2$ and account for the
  fact that the stability estimates \eqref{eq:9} are only valid for
  $H^s,s<\frac{3}{2}$ and that $\|\p_y \omega\|_{L^\infty} \leq C \log(2+t)$ for
  $\omega \in \{\omega_1,\p_x \omega_1,\omega_2,\p_x \omega_2\}$ (c.f.
  \cite{Zill5}, \cite{wei2018linear}).
  We claim that
  \begin{itemize}
  \item $I, III$ are bounded and convergent in $H^s, s<-\frac{1}{2}$. 
  \item $III$ is bounded in $L^2$ and convergent in $H^s,s<0$.
  \item $IV$ is uniformly bounded in $H^{-1}$ and weakly asymptotically convergent.
  \end{itemize}

  \underline{Ad I: } We remark that this case in a sense corresponds to the unforced
  problem. In \cite{Zill5} it has been discussed for the setting without
  boundary, where stability in higher Sobolev regularity is available, and in
  the context of a blow-up result for the setting with boundary.
  In the present setting, we instead aim to establish stability in negative
  Sobolev regularity $H^s, s<-\frac{1}{2}$, despite the blow-up behavior and
  when $\omega_1$ is of comparably low regularity.

  Since $S(\cdot,\cdot)$ is a bounded linear operator on $H^s$, it suffices to
  show that
  \begin{align}
    \label{eq:17}
    \int_0^t \| \nabla^\bot \phi_1(\tau) \cdot \nabla \omega_1(\tau) \|_{H^s} d\tau
  \end{align}
  is uniformly bounded in $t$.
  If higher regularity estimates were available, we could estimate by
  \begin{align*}
    \|\nabla^\bot \phi_1(\tau)\|_{L^2} &\leq (1+|\tau|)^{-\sigma} \|\omega_1(\tau)\|_{H^{1+\sigma}}, \\
    \|\nabla \omega_1(\tau)\|_{L^\infty} &\leq \|\omega_1(\tau)\|_{H^{\sigma'}},
  \end{align*}
  with $1<\sigma\leq 2$ and $\sigma'>\frac{3}{2}$. However, as shown for
  instance in \cite{Zill5}, both estimates fail in the setting with boundary in the sense
  that $\|\omega_1(\tau)\|_{H^\sigma}$ for $\sigma>\frac{3}{2}$ and
  $\|\p_y\omega_1(\tau)\|_{L^\infty}$ generally grow unbounded as $\tau
  \rightarrow \infty$.
  
  Using the null structure of the nonlinearity, we obtain the following estimate
  by interpolation for any $s \in [-1,0]$:
  \begin{align*}
    \|\nabla^\bot \phi_1(\tau) \cdot \nabla \omega_1(\tau)\|_{L^2} &\leq \|\nabla^\bot \phi_1(\tau)\|_{L^2} \|\nabla \omega_1(\tau)\|_{L^\infty}, \\
    \|\nabla^\bot \phi_1(\tau) \cdot \nabla \omega_1(\tau)\|_{H^{-1}} &= \| \nabla^{\bot} \cdot (\phi_1(\tau) \cdot \nabla \omega_1(\tau))\|_{H^{-1}} \leq \|\phi_1(\tau)\|_{L^2} \|\nabla \omega_1(\tau) \|_{L^\infty} \\
    \Rightarrow \|\nabla^\bot \phi_1(\tau) \cdot \nabla \omega_1(\tau)\|_{H^s} & \leq  \|\phi_1(\tau)\|_{H^{s+1}} \|\nabla \omega_1(\tau) \|_{L^\infty}.
  \end{align*}
  We further recall that, by \eqref{eq:10}, for $\sigma \in [0,2]$ 
  \begin{align*}
    \|\phi_1(\tau)\|_{H^{s+1}} \leq C(1+|\tau|)^{-\sigma} \|\omega_1(\tau)\|_{H^{s+1+\sigma}}
  \end{align*}
  and further use that by the more detailed analysis of the extremal case
  $\sigma=\frac{3}{2}$ (c.f. \cite{CZZ17}, \cite{Zill5}, \cite{wei2018linear})
  it holds that
  \begin{align*}
    \|\nabla \omega_1(\tau) \|_{L^\infty} \leq C \log(2+\tau).
  \end{align*}
  Hence, choosing $s<-\frac{1}{2}$ and $1< \sigma \leq 2$ such that
  $s+1+\sigma<\frac{3}{2}$ it follows that \eqref{eq:17} is uniformly bounded in
  $t$.
  \\
  
  \underline{Ad II:} We introduce the short notation 
  \begin{align*}
  f_2(t,x,y):= \nabla^\bot \phi_2(t,x,y) \cdot  \nabla \omega_2(t, x, y) .
  \end{align*}
  Then $I$ may be written as
  \begin{align*}
    \int_0^tS(t,0) S(0, \tau) f_2(\tau, x -\tau U(y),y) d\tau = S(t,0) \int_0^t (S(-\tau, 0) f_2)(\tau, x -\tau U(y),y) d\tau.
  \end{align*}
  We note that this term is very similar to forcing in the
  stationary case except that 
  \begin{align*}
    (S(-\tau, 0) f_2)(\tau,x,y) =: f_2^\star(\tau, x,y)
  \end{align*}
  explicitly depends on time.
  Using integration by parts and assuming zero average in $x$,
  we may compute the time-integral as
  \begin{align*}
    & \quad S(t,0)  \frac{1}{U(y)}\p_x^{-1}\left(f_2^\star(t,x-t U(y),y) - f_2^\star(0,x,y)\right) \\
    & - S(t,0) \int_0^t \frac{1}{U(y)}\p_x^{-1}\p_\tau f_2^\star (\tau,x-\tau U(y), y) d\tau. 
  \end{align*}
  The term in the first line is stable in a sum space and asymptotically
  converges in $H^{s}, s<0$.
  It thus remains to discuss the integral term, which involves the time
  dependence of $f^\star_2$ on $f_2$ via $S(-\tau,0)$ and of $f_2$.
  If $f_2$ were sufficiently regular, one could estimate this term using the
  algebra property of $H^\sigma, \sigma> 1$ and algebraic decay rates in
  \eqref{eq:10}.
  However, as $S(\cdot,\cdot)$ is only bounded in $H^s, |s|< \frac{3}{2}$ by
  \eqref{eq:9} we again instead consider negative Sobolev spaces $H^s,
  s<-\frac{1}{2}$.
  In that case, we may choose $1<\sigma\leq 2$ such that $s+\sigma
  <\frac{1}{2}$ and estimate
  \begin{align*}
    \|(\p_\tau S(-\tau,0))f_2\|_{H^s} \leq C (1+|\tau|)^{-\sigma} \|f_2\|_{H^{s+\sigma}}
  \end{align*}
  In particular, we may then use a Sobolev embedding and the null structure of the nonlinearity 
  \begin{align*}
    \nabla^\bot \phi_2 \cdot  \nabla \omega_2 = \nabla \cdot ( \nabla^\bot \phi_2  \omega_2 ) = \nabla^\bot \cdot (\phi_2 \nabla \omega_2),
  \end{align*}
  to control $\|f_2\|_{H^{s+\sigma}}$ in terms of the initial data.

  Concerning the time derivative of $f_2$, by the mapping properties of
  $S(\cdot,\cdot)$ it suffices to estimate $\|\dot f_2(\tau)\|_{H^s}$.
  Here, as in case $I$, we use the structure of the nonlinearity to estimate
  \begin{align*}
    \|\dot f_2\|_{H^s} \leq \|\dot \phi_2\|_{H^{s+1}} \|\nabla \omega_2\|_{L^\infty}  + \|\nabla \phi_2\|_{L^\infty} \|\dot \omega_2\|_{H^{s+1}}.
  \end{align*}
  The integrand is hence decaying with integrable rates in $H^{s}, s<
  -\frac{1}{2}$, and in particular convergent.
  \\
   
  \underline{Ad III:}
  We express the integral as
  \begin{align*}
    \int_0^t S(t,\tau) \nabla_\tau^\bot \Delta_\tau^{-1} \omega_1(\tau, x,y) \cdot (\nabla \omega_2)(\tau, x+\tau U(y),y) d\tau
  \end{align*}
  Integrating by parts in the $x+\tau U(y)$ dependence, we obtain
  \begin{align*}
    S(t,\tau) \nabla_\tau^\bot \Delta_\tau^{-1} \omega_1(\tau, x,y) \frac{1}{U(y)}\p_x^{-1}(\nabla \omega_2)(\tau, x+\tau U(y),y) \bigg|_{t=0}^T,
  \end{align*}
  which is asymptotically stable in $H^s, s<0$,  as well as several error terms
  \begin{align*}
    & \quad \int_0^t \dot S(t,\tau) \nabla_\tau^\bot \Delta_\tau^{-1} \omega_1(\tau, x,y) \cdot \frac{1}{U(y)}\p_x^{-1} (\nabla \omega_2)(\tau, x+\tau U(y),y) d\tau \\
    & + \int_0^t S(t,\tau) (\nabla_\tau^\bot \Delta_\tau^{-1})' \omega_1(\tau, x,y) \cdot \frac{1}{U(y)}\p_x^{-1} (\nabla \omega_2)(\tau, x+\tau U(y),y) d\tau \\
    & + \int_0^t S(t,\tau) \nabla_\tau^\bot \Delta_\tau^{-1} \dot \omega_1(\tau, x,y) \cdot \frac{1}{U(y)}\p_x^{-1} (\nabla \omega_2)(\tau, x+\tau U(y),y) d\tau \\
    & + \int_0^t S(t,\tau) \nabla_\tau^\bot \Delta_\tau^{-1} \omega_1(\tau, x,y) \cdot \frac{1}{U(y)}\p_x^{-1} (\nabla \dot \omega_2)(\tau, x+\tau U(y),y) d\tau.
  \end{align*}
  We remark that the second and third term can be estimated in $L^2$ using that
  \begin{align*}
    & \quad \|(\nabla_\tau^\bot \Delta_\tau^{-1})' \omega_1(\tau, x,y)\|_{L^2} + \| \nabla_\tau^\bot \Delta_\tau^{-1} \dot \omega_1(\tau, x,y)\|_{L^2} \\
    &\leq C (1+\tau)^{-3/2+\delta} (\|\omega_1\|_{H^{3/2-\delta}} +\|\p_x\omega_1\|_{H^{3/2-\delta}}).
  \end{align*}
  Similarly, for the last term, we may use the structure of the nonlinearity to
  estimate by
  \begin{align}
    \label{eq:30}
    \|\nabla_\tau^\bot \Delta_\tau^{-1} \omega_1(\tau, x,y)\|_{L^\infty} \|\dot \omega_2\|_{L^2}
  \end{align}
  and
  \begin{align}
    \label{eq:31}
    \|\p_x \Delta_\tau^{-1} \omega_1(\tau, x,y)\|_{L^2} \| \nabla \dot \omega_2\|_{L^\infty},
  \end{align}
  since the scalar products involves only pairs of $(\p_y-tU'(y)\p_x, \p_x)$ derivatives.
  In both \eqref{eq:30} and \eqref{eq:31}, the $L^2$ norm decays with an algebraic rate, while the
  $L^\infty$ norms are either bounded or grows at most logarithmically with time.

  Hence, it only remains to consider the $\dot S$ contribution.
  We recall that by the structure of the equation and assuming that $U'' \in
  L^\infty$,  estimating $\dot S$ reduces to estimating
  \begin{align*}
   & \quad  \| \p_x \Delta_\tau^{-1} \left( \nabla_\tau^\bot \Delta_\tau^{-1} \omega_1(\tau, x,y) \cdot \frac{1}{U(y)}\p_x^{-1} \nabla_\tau \omega_2(\tau, x+\tau U(y),y) \right) \|_{L^2} \\
    & \leq \| \p_x \Delta_\tau^{-1} \omega_1(\tau, x,y)\|_{L^2} \|\nabla_\tau \omega_2(\tau, x+\tau U(y),y)\|_{L^\infty} \\
    & \leq t^{-3/2+\delta} \|\p_x \omega_1\|_{H^{3/2-\delta}} \log(t). 
  \end{align*}
  Here, we again used the structure of the nonlinearity as $\nabla_\tau^\bot
  \cdot ( a \nabla_\tau b)$ in order to absorb $\nabla_\tau^\bot$ into $\Delta_\tau^{-1}$.
  \\
  
  \underline{Ad IV:} 
  In view of the leading term in the case of Proposition
  \ref{prop:couetteconsistency}, we claim that $\|IV\|_{H^{-1}}$ is uniformly
  bounded in time. Here, compared to Section \ref{sec:couette} additional
  challenges are given by the logarithmic growth bound on $\|\nabla
  \omega_1\|_{L^\infty}$ and the time-dependence via $S(\cdot,\cdot)$.
  
  For this purpose, we rephrase the integral as
  \begin{align}
    & \quad \int_0^t S(t,\tau) (\nabla^\bot (\Delta^{-1}\omega_2)(\tau,x+\tau U(y), y)) \cdot \nabla \omega_1(\tau,x,y)) d\tau \\
    &= S(t,\tau) \left(\nabla^\bot \frac{1}{U(y)}\p_x^{-1}(\Delta^{-1}\omega_2)(\tau,x+\tau U(y), y)) \cdot \nabla \omega_1(\tau,x,y)\right) \bigg|_{\tau=0}^t \\
    & \quad - \int_0^t \dot S(t,\tau) \left(\nabla^\bot \frac{1}{U(y)}\p_x^{-1} (\Delta^{-1}\omega_2)(\tau,x+\tau U(y), y)) \cdot \nabla \omega_1(\tau,x,y)\right) d\tau \\
    & \quad - \int_0^t S(t,\tau) (\nabla^\bot \frac{1}{U(y)}\p_x^{-1} (\Delta^{-1}\dot \omega_2)(\tau,x+\tau U(y), y)) \cdot \nabla \omega_1(\tau,x,y)) d\tau \\
    & \quad - \int_0^t S(t,\tau) (\nabla^\bot \frac{1}{U(y)}\p_x^{-1} (\Delta^{-1}\omega_2)(\tau,x+\tau U(y), y)) \cdot \nabla \dot \omega_1(\tau,x,y)) d\tau.
  \end{align}
  We note that the first term, again using the structure of the nonlinearity, can
  be estimated in $H^{-1}$ by
  \begin{align*}
    \|\frac{1}{U(y)}\p_x^{-1}(\Delta^{-1}\omega_2)(\tau,x+\tau U(y), y)  \nabla \omega_1(\tau,x,y)\|_{L^2}.
  \end{align*}
  We  observe that control by
  \begin{align*}
     \|\frac{1}{U(y)}\p_x^{-1}(\Delta^{-1}\omega_2)(\tau,x+\tau U(y), y)\|_{L^\infty} \|\omega_1(\tau,x,y)\|_{H^1}
  \end{align*}
  yields a uniform bound in $H^{-1}$.
  Concerning asymptotic stability of this term, we note that $S(\cdot,\cdot)$ is (strongly)
  convergent in $H^{-1}$ and it thus suffices to consider
  \begin{align*}
    \left(\nabla^\bot \frac{1}{U(y)}\p_x^{-1}(\Delta^{-1}\omega_2)(\tau,x,y) \cdot \nabla \omega_1(\tau,x+\tau U(y),y)\right) \bigg|_{\tau=0}^t,
  \end{align*}
  where we switched to Eulerian coordinates.
  Using the null structure of the nonlinearity and that the first factor is
  bounded in $W^{1,\infty}$ by the Sobolev embedding, we obtain weak convergence
  via the duality formulation of $H^{-1}$ and the oscillation of $\omega_1(\tau,
  x+\tau U(y),y)$.

  For the second term, we observe that by the properties of $\dot
  S(\cdot,\cdot)$ we need to estimate 
  \begin{align*}
   \int_0^t \|\Delta_{\tau}^{-1} \left(\nabla^\bot \frac{1}{U(y)}\p_x^{-1} (\Delta^{-1}\omega_2)(\tau,x+\tau U(y), y)) \cdot \nabla \omega_1(\tau,x,y)\right)\|_{H^{-1}} d\tau.
  \end{align*}
  We remark that the operator norm of $\Delta_\tau^{-1}: L^2 \mapsto H^{-1}$ is
  bounded by $C (1+\tau)^{-1}$, which yields a bound $\|IV\|_{H^{-1}}\leq
  C \log(2+t)$.
  In order to improve this estimate we first consider the case when $\omega_2$ is restricted to
  Fourier modes
  \begin{align*}
   \Omega_1(t):=\{(k,\eta): |\eta|\geq c t\}
  \end{align*}
  with respect to $x, z=U(y)$  with a small constant $c>0$ to be fixed later.
  Then it holds that
  \begin{align*}
    \|\Delta^{-1}P_{\Omega_1(t)}\omega_2\|_{L^2} \leq C t^{-2} \|P_{\Omega_1(t)}\omega_2\|_{L^2}.
  \end{align*}
  In this case, we control
  \begin{align*}
    & \quad \|\Delta_{\tau}^{-1} \left(\nabla^\bot_\tau \frac{1}{U(y)}\p_x^{-1} (\Delta^{-1}P_{\Omega_1}\omega_2)(\tau,x+\tau U(y), y)) \cdot \nabla_\tau \omega_1(\tau,x,y)\right)\|_{L^2} \\
   &\leq \| \Delta^{-1}P_{\Omega_1}\omega_2\|_{L^2} \|\omega_1(\tau,x,y)\|_{L^\infty} \leq C t^{-2}.
  \end{align*}
  Similarly, if we restrict $\omega_1$ to $\Omega_1$, we may estimate
  \begin{align*}
    & \quad \|\Delta_{\tau}^{-1} \left(\nabla^\bot_\tau \frac{1}{U(y)}\p_x^{-1} (\Delta^{-1}\omega_2)(\tau,x+\tau U(y), y)) \cdot \nabla_\tau P_{\Omega_1}\omega_1(\tau,x,y)\right)\|_{L^2}
    \\ & \leq \|\nabla^\bot_\tau \frac{1}{U(y)}\p_x^{-1} (\Delta^{-1}\omega_2)(\tau,x+\tau U(y), y)\|_{L^\infty} \|P_{\Omega_1}\omega_1\|_{L^2} \\
    & \leq C t^{-3/2+\delta} \|\omega_1\|_{H^{3/2-\delta}}.
  \end{align*}
  Hence, it remains to estimate the case when both $\omega_1(\tau,x,y)$ and
  $\omega_2(\tau,x,y)$ are projected onto the complement of $\Omega_1(\tau)$.
  Here, we note that, due to the shear in $x$ and the assumed vanishing
  $x$ average of $\omega_2$,
  \begin{align*}
    \p_x^{-1} (\Delta^{-1}P_{\Omega_1^c}\omega_2)(\tau,x+\tau U(y), y))
  \end{align*}
  is highly oscillatory and close to resonant, i.e. $|\eta-kt|\leq ct$.
  However, since $\omega_2$ also vanishes in $x$, this implies that the product
  is localized at frequencies $(k+l, \eta+\xi)$ such that 
  \begin{align*}
    |(\eta+\xi- (k+l)t)| \geq |l|t - 2ct \geq (1-2c)t.
  \end{align*}
  Hence, the product is non-resonant in this case, and we may estimate
  \begin{align*}
    & \quad \|\Delta_{\tau}^{-1} \left(\nabla^\bot_\tau \frac{1}{U(y)}\p_x^{-1} (\Delta^{-1}P_{\Omega_1^c}\omega_2)(\tau,x+\tau U(y), y)) \cdot \nabla_\tau P_{\Omega_1^c}\omega_1(\tau,x,y)\right)\|_{H^{-1}} \\
    & \leq t^{-2} \|\nabla^\bot \frac{1}{U(y)}\p_x^{-1} (\Delta^{-1}P_{\Omega_1^c}\omega_2)(\tau,x+\tau U(y), y)\cdot \nabla P_{\Omega_1^c}  \omega_1(\tau,x,y)\|_{H^{-1}} \\
   & \leq t^{-2} \|\frac{1}{U(y)}\p_x^{-1} (\Delta^{-1}P_{\Omega_1^c}\omega_2)(\tau,x+\tau U(y), y)\|_{L^2} \|\nabla P_{\Omega_1^c}\omega_1\|_{L^\infty}.
  \end{align*}

  Finally, let us consider the last term.  We note that an estimate by
  \begin{align*}
    & \quad \|(\nabla^\bot_\tau \frac{1}{U(y)}\p_x^{-1} (\Delta^{-1}\omega_2)(\tau,x+\tau U(y), y)) \cdot \nabla_\tau \dot \omega_1(\tau,x,y))\|_{H^{-1}} \\
    &\leq \|\nabla^\bot_\tau \frac{1}{U(y)}\p_x^{-1} (\Delta^{-1}\omega_2)(\tau,x+\tau U(y), y)\|_{L^\infty} \| \nabla_\tau \dot \omega_1(\tau,x,y)\|{L^2} \leq  C \tau^{-1},
  \end{align*}
  is again just barely insufficient to establish boundedness of the $\tau$
  integral and that the estimate of the last $L^2$ norm can not be expected to
  be better than $\tau^{-1}$ even if $\omega_1$ were arbitrarily regular.
  However, we note that compared to $IV$ the structure of this term is better in
  that $\dot \omega_1$ exhibits additional decay.
  We hence, repeat our integration by parts argument to obtain
  \begin{align*}
    & \quad S(t,\tau) (\nabla^\bot \frac{1}{U(y)^2}\p_x^{-2} (\Delta^{-1}\omega_2)(\tau,x+\tau U(y), y) \cdot \nabla \dot \omega_1(\tau,x,y)) d\tau \bigg|_{\tau=0}^t \\
   & - \int_0^t \dot S(t,\tau) (\nabla^\bot \frac{1}{U(y)^2}\p_x^{-2} (\Delta^{-1}\omega_2)(\tau,x+\tau U(y), y) \cdot \nabla \dot \omega_1(\tau,x,y)) d\tau \\
   & - \int_0^t S(t,\tau) (\nabla^\bot \frac{1}{U(y)^2}\p_x^{-2} (\Delta^{-1}\dot \omega_2)(\tau,x+\tau U(y), y) \cdot \nabla \dot \omega_1(\tau,x,y)) d\tau \\
   & - \int_0^t S(t,\tau) (\nabla^\bot \frac{1}{U(y)^2}\p_x^{-2} (\Delta^{-1}\omega_2)(\tau,x+\tau U(y), y) \cdot \nabla \ddot \omega_1(\tau,x,y)) d\tau.
  \end{align*}
The first three terms can then be estimated as above, while for the last term,
we may control by 
\begin{align*}
 & \quad  \| (\nabla^\bot_\tau \frac{1}{U(y)^2}\p_x^{-2} (\Delta^{-1}\omega_2)(\tau,x+\tau U(y), y)\|_{L^\infty} \|\nabla_\tau \ddot \omega_1(\tau,x,y)\|_{L^2} \\
  & \leq C \tau^{-3/2+\delta} (\|\omega_1\|_{H^{3/2-\delta}} +\|\p_x\omega_1\|_{H^{3/2-\delta}}).
\end{align*}
In summary, we thus obtain that also this final term is integrable and hence
$\|IV\|_{H^{-1}}\leq C$, as claimed.
\end{proof}

\section{The Forced Navier-Stokes Problem}
\label{sec:simple}

We are interested in the long-time asymptotic behavior of the forced Navier-Stokes equations on the infinite periodic channel $\T \times \R$ near Couette flow
\begin{align}
  \label{eq:21}
  \begin{split}
  \dt \omega + y \p_{x} \omega - \nu \Delta \omega &= f - v \cdot \nabla \omega,\\
  v &= \nabla^{\bot} \phi, \\
 \Delta \phi &=\omega,
  \end{split}
\end{align}
where $\omega$ denotes the nonlinear perturbation to Couette flow and $v$ the
associated perturbation of the velocity field.
Here, we will mostly focus on forcing analogous to the stationary and resonant
cases of Section \ref{sec:couette}.

\subsection{The Linearized Problem}
As a simple introductory model let us consider the linearized equation
\begin{align}
  \label{eq:1}
  \begin{split}
  \dt \omega + y \p_{x} \omega - \nu \Delta \omega &= f,\\
  v &= \nabla^{\bot} \phi, \\
 \Delta \phi &=\omega.
  \end{split}
\end{align}
For an in-depth discussion of the unforced linear and nonlinear case, the
interested reader is referred to \cite{bedrossian2016enhanced}.

We note that like the unforced linear problem allows for an explicit solution by a
Fourier multiplier. 
\begin{lem}
\label{lem:NDuhamel}  
  Let $\omega_0 \in L^2(\T \times I)$ and $f \in C(\R_+, L^2(\T \times I))$.
  Then the explicit solution $\omega$ of \eqref{eq:1} is given by
  \begin{align*}
    \tilde{\omega}(t,k,\eta+kt) &= \exp \left( -\nu \int_0^t k^2+(\eta-k\tau)^2 d\tau \right) \tilde{\omega}_0(k,\eta) \\
                                & \quad + \int_0^t \exp \left( -\nu \int_\tau^{t} k^2+(\eta-ks)^2 ds \right) \tilde{f}(\tau,k,\eta+k\tau) d\tau,
  \end{align*}
\end{lem}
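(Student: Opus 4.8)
The plan is to reduce the system to a family of scalar ordinary differential equations (one per Fourier mode) by taking the Fourier transform in both spatial variables. Writing $\tilde{\omega}(t,k,\eta)$ for the transform in $x \mapsto k$ and $y \mapsto \eta$, I first observe that the stream function decouples completely: the profile is Couette, so $U'' = 0$ and the vorticity equation in \eqref{eq:1} contains no $\phi$–term; it is a closed linear transport–diffusion equation for $\omega$, and $\phi, v$ are recovered afterward from $\Delta\phi = \omega$. This is also where the linearity of the profile is essential: under the transform $\p_x \mapsto ik$ and $-\nu\Delta \mapsto \nu(k^2+\eta^2)$, while the transport term $y\,\p_x\omega$ — because the coefficient is exactly $y$ — turns into a first-order differentiation in frequency, $\pm k\,\p_\eta\tilde{\omega}$ (multiplication by $y$ becomes $\pm i\p_\eta$). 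Fixing the Fourier sign convention so as to match the stated shifts, for each fixed $k$ the equation becomes
\[
  \p_t\tilde{\omega} + k\,\p_\eta\tilde{\omega} + \nu(k^2+\eta^2)\,\tilde{\omega} = \tilde{f},
\]
a linear transport equation in $\eta$ with a frequency-dependent damping coefficient, solvable by characteristics.

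Second, I would eliminate the transport term by passing to the moving (Lagrangian) frequency frame. The characteristics of $\p_t + k\p_\eta$ are the lines $\eta - kt = \text{const}$, so I set the profile $W(t,k,\eta) := \tilde{\omega}(t,k,\eta+kt)$, which is precisely the quantity appearing on the left-hand side of the claimed identity. A direct chain-rule computation shows that the transport term cancels and that $W$ solves the scalar ODE in $t$ (with $k,\eta$ as parameters)
\[
  \p_t W(t,k,\eta) = -\nu\big(k^2 + (\eta+kt)^2\big)\,W(t,k,\eta) + \tilde{f}(t,k,\eta+kt),
  \qquad W(0,k,\eta) = \tilde{\omega}_0(k,\eta),
\]
where the time-dependent coefficient is just $\nu(k^2+\eta(s)^2)$ evaluated along the characteristic through the flow.

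Third, I would solve this ODE with the integrating factor $\mu(t) = \exp\big(\nu\int_0^t k^2 + (\eta+ks)^2\,ds\big)$, for which $\p_t(\mu W) = \mu\,\tilde{f}(t,k,\eta+kt)$ since $\mu' = \nu(k^2+(\eta+kt)^2)\mu$. Integrating from $0$ to $t$ and dividing by $\mu(t)$ produces the homogeneous term $\mu(t)^{-1}\tilde{\omega}_0(k,\eta)$ together with the Duhamel term $\int_0^t \frac{\mu(\tau)}{\mu(t)}\,\tilde{f}(\tau,k,\eta+k\tau)\,d\tau$; using $\mu(t)^{-1} = \exp(-\nu\int_0^t \cdots)$ and $\mu(\tau)/\mu(t) = \exp(-\nu\int_\tau^t \cdots)$ this is exactly the asserted formula, the exponents being the dissipation accumulated along the characteristic between the relevant times. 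The only care needed here is the sign bookkeeping of the shifts, which is dictated by the orientation of the characteristic; I would simply track the frequency $\eta(s)$ along the flow to pin down each $\pm$.

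Finally — and this is the honest assessment of difficulty — there is no genuine analytic obstacle: the argument is a mode-by-mode ODE computation and requires none of the delicate Sobolev estimates of the inviscid sections. The only points worth verifying are well-posedness and that the formula truly represents the solution. Since each dissipation factor is bounded by $1$, for $\omega_0 \in L^2$ and $f \in C(\R_+, L^2)$ the right-hand side defines an $L^2$ function for every $t$ (the Duhamel integrand is dominated by $\|\tilde{f}(\tau)\|_{L^2}$, continuous hence integrable on $[0,t]$), and dominated convergence gives continuity in $t$; uniqueness is immediate because, mode by mode, the transformed problem is a first-order linear ODE with a unique solution. Inverting the Fourier transform recovers $\omega$, and $\phi$ and $v$ follow a posteriori. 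I would therefore present the lemma's proof essentially as the three displayed steps above, with the verification of $L^2$-boundedness and of the reverse substitution relegated to a remark.
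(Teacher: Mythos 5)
Your proof is correct and takes essentially the same route as the paper: the paper changes to the sheared frame $W(t,x,y)=\omega(t,x+ty,y)$ in physical space and then Fourier transforms, whereas you Fourier transform first and ride the characteristics in $\eta$ --- the identical mode-by-mode reduction to a scalar linear ODE solved by Duhamel's principle (your integrating factor), just with the two steps in the opposite order. Your care about the $\pm$ sign in the frequency shift is warranted but harmless; note the paper's own statement pairs the coefficient $(\eta-k\tau)^2$ with the forcing argument $\eta+k\tau$, so the convention there is itself loose, and your internally consistent choice gives the same formula up to the reflection $\eta\mapsto-\eta$.
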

In particular, we note for $k \neq 0$,
\begin{align*}
  \nu \int_0^t k^2+(\eta-k\tau)^2 d\tau \geq \frac{1}{3} k^2\tau^3
\end{align*}
and thus the solution of the unforced problem decays with rate $\exp(-C\nu
t^3)$, which is much faster than the at first expected exponential decay rate of
the heat equation. One says that this problem exhibits \emph{enhanced
  dissipation}.
\begin{proof}
  We note that $W(t,x,y):=\omega(t,x+ty,y)$ satisfies the equation
  \begin{align*}
    \dt W - \nu (\p_x^2+(\p_y-t\p_x)^2) W= f(t,x+ty,y) \\
    \leadsto \dt \tilde{W}(t,k,\eta) + \nu (k^2+(\eta-kt)^2) \tilde{W}(t,k,\eta)= \tilde{f}(t,k,\eta+kt).
  \end{align*}
  The claimed solution formula hence follows by an application of Duhamel's
  principle to this family of inhomogeneous ordinary differential equations. 
\end{proof}

As in Section \ref{sec:couette}, we consider as prototypical cases
$f(t,x,y)=f_0(x,y)$ being \emph{stationary} and $f(t,x,y)=f_0(x-ty,y)$ being
\emph{resonant}.
\begin{cor}
  \label{cor:linearizedcases}
  Let $\omega_0, f_0 \in L^2(\T \times \R)$ with $\int \omega_0 dx= 0 =\int f_0 dx$ and let $f(t,x,y)=f_0(x-ty,y)$, then
  the solution of \eqref{eq:1} with initial datum $\omega_0$ is given by
  \begin{align*}
    \tilde{\omega}(t,k,\eta+kt) &= \exp \left( -\nu \int_0^t k^2+(\eta-k\tau)^2 d\tau \right) \tilde{\omega}_0(k,\eta) \\
                                & \quad + \tilde{f}_0(k,\eta) \int_0^t \exp \left( -\nu \int_\tau^t k^2+(\eta-ks)^2 ds \right) d\tau \\
                                & =: \exp \left( -\nu \int_0^t k^2+(\eta-k\tau)^2 d\tau \right) \tilde{\omega}_0(k,\eta) \\
                                & \quad + b(t,k, \eta+kt)  \tilde{f}_0(k,\eta).
  \end{align*}
  Here, $b(t,k,\eta)\geq 0$ is monotonically increasing in $t$, bounded and
  $b_\infty(t):=\lim\limits_{t\rightarrow \infty} b(t,k,\eta)$ only decays with an
  algebraic rate $(k^2+\eta^2)^{-1}$.
  In particular, it follows that
  \begin{itemize}
  \item The evolution (of $\omega(t,x,y)$ or $\omega(t,x+ty,y)$) is
    stable in $H^s, s \leq 0$ and the solution asymptotically
    converges to zero. 
    However, unless $f_0$ is trivial, the decay rate is only algebraic. 
  \item The associated velocity field $v(t)=\nabla^\bot
    \Delta^{-1}\omega(t)$ is stable in $L^2$ and converges to zero in $L^2$ as $t
    \rightarrow \infty$.
  \end{itemize}

  If we instead consider the case where $f(t,x,y)=f_0(x,y)$ is stationary with
  $\int f_0 dx =0$, then there exists a stationary solution $g$ of \eqref{eq:1}
  and any solution $\omega$ with initial data $\omega_0 \in L^2$, $\int \omega_0
  dx=0$ satisfies
  \begin{align*}
    \|\omega(t)-g\|_{L^2}\leq \exp(-\frac{\nu}{3} t^3)\|\omega_0-g\|_{L^2}.
  \end{align*}
\end{cor}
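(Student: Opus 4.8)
The plan is to reduce both assertions to the explicit Fourier representation of Lemma~\ref{lem:NDuhamel} and then read off the behaviour from elementary estimates on the resulting scalar time integrals. For the resonant forcing $f(t,x,y)=f_0(x-ty,y)$ I would first note that in the Lagrangian frame the forcing is \emph{stationary}: since $W(t,x,y)=\omega(t,x+ty,y)$ is driven by $f(t,x+ty,y)=f_0(x,y)$, the Fourier datum $\tilde f(\tau,k,\eta+k\tau)=\tilde f_0(k,\eta)$ does not depend on $\tau$. Inserting this into the Duhamel formula of Lemma~\ref{lem:NDuhamel} and pulling the constant $\tilde f_0(k,\eta)$ out of the $\tau$–integral produces exactly the claimed formula, with
\begin{align*}
  b(t,k,\eta+kt)=\int_0^t \exp\!\Big(-\nu\!\int_\tau^t k^2+(\eta-ks)^2\,ds\Big)\,d\tau .
\end{align*}
The factor multiplying $\tilde\omega_0$ is controlled by the enhanced–dissipation bound recorded after Lemma~\ref{lem:NDuhamel}, so the whole problem is now concentrated in the single function $b$.

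To obtain the properties of $b$ I would change variables by $r=t-\tau$ and $u=t-s$, which, written in the shifted frequency $\zeta:=kt-\eta$, turns $b$ into
\begin{align*}
  b=\int_0^t \exp\!\Big(-\nu\!\int_0^r k^2+(\zeta-ku)^2\,du\Big)\,dr .
\end{align*}
In this form nonnegativity and monotonicity in $t$ are immediate, since the integrand is positive and $t$ enters only through the upper limit. Discarding the term $(\zeta-ku)^2\ge0$ gives the uniform bound $b\le\int_0^\infty e^{-\nu k^2 r}\,dr=(\nu k^2)^{-1}\le\nu^{-1}$, using $|k|\ge1$; and because the inner integral grows cubically in $r$, the integrand is super–exponentially small, so $b(t,k,\zeta)\nearrow b_\infty(k,\zeta)$ as $t\to\infty$. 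A Laplace estimate at $r=0$, where the exponent is $\approx\nu(k^2+\zeta^2)r$, combined with the opposite bound from $(\zeta-ku)^2\le2\zeta^2+2k^2u^2$, pins $b_\infty$ between constant multiples of $\nu^{-1}(k^2+\zeta^2)^{-1}$, which is the stated algebraic profile. Read back in the untransformed variables one has, for fixed $(k,\eta)$, the pointwise decay $b\sim(\nu k^2t^2)^{-1}\to0$; this is the source of the merely algebraic decay rate in time.

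The conclusions for the resonant case follow quickly. The condition $\int\omega_0\,dx=\int f_0\,dx=0$ is preserved by the flow, so only modes with $|k|\ge1$ occur and $\|\cdot\|_{H^s}\le\|\cdot\|_{L^2}$ for every $s\le0$; it therefore suffices to work in $L^2$. By Plancherel and $\|b\|_\infty\le\nu^{-1}$ the forcing contribution is bounded by $\nu^{-1}\|f_0\|_{L^2}$, giving stability; since $b\to0$ pointwise and is dominated by $\nu^{-1}|\tilde f_0|\in L^2$, dominated convergence forces this contribution to $0$, while the data part decays like $e^{-\nu t^3/3}$, so $\omega(t)\to0$ (algebraically). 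For the velocity I would use $\|v\|_{L^2}=\||\nabla|^{-1}\omega\|_{L^2}\lesssim\|\omega\|_{L^2}$ to transfer both the bound and the convergence to $v$.

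For the stationary forcing $f(t,x,y)=f_0(x,y)$ I would first construct the steady state $g$ by solving $y\p_x g-\nu\Delta g=f_0$ mode by mode: for each $k\ne0$ the operator $iky-\nu(\p_y^2-k^2)$ is accretive with real part $\ge\nu k^2>0$, hence boundedly invertible on $L^2$ with an $H^1$ bound, and the vanishing $x$–average removes the $k=0$ obstruction, so assembling the modes yields $g\in H^1$. The difference $\omega-g$ then solves the \emph{unforced} linearized equation, and the super–exponential enhanced–dissipation estimate $\nu\int_0^t k^2+(\eta-k\tau)^2\,d\tau\ge\frac\nu3 t^3$ of Lemma~\ref{lem:NDuhamel} gives $\|\omega(t)-g\|_{L^2}\le e^{-\nu t^3/3}\|\omega_0-g\|_{L^2}$. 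The main obstacle throughout is the sharp analysis of $b$: establishing genuine monotonicity in the correct variable together with the two–sided $(k^2+\zeta^2)^{-1}$ asymptotics of $b_\infty$, uniformly in $(k,\zeta)$ and $\nu$. It is precisely the competition between the growing weight $(k^2+(\eta+kt)^2)^s$ and this decay of $b$ that both forces the restriction to $s\le0$ and leaves only an algebraic—rather than enhanced-dissipative—rate for the forced part.
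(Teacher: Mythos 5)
Your treatment of the resonant case follows the paper's proof almost verbatim: the same reduction via Lemma~\ref{lem:NDuhamel} (the forcing is constant in Lagrangian Fourier variables, so $\tilde f_0$ pulls out), the same change of variables putting $b$ in the form $\int_0^t \exp(-\nu\int_0^r k^2+(\zeta-ku)^2\,du)\,dr$, and the same monotonicity/boundedness observations; your dominated-convergence argument for $\|b\,\tilde f_0\|_{L^2}\to 0$ is a clean substitute for the paper's identification $\|b\,\tilde f_0\|_{L^2}\approx \|f_0(x-ty,y)\|_{\dot H^{-2}}$. Where you genuinely diverge is the construction of the stationary solution $g$: the paper truncates to $\T\times(-R,R)$, applies Lax--Milgram to $B_R(h_1,h_2)=\nu\langle \nabla h_1,\nabla h_2\rangle + \langle h_1, y\p_x h_2\rangle$ (coercive thanks to the vanishing $x$-average), extracts a weak limit as $R\to\infty$, and proves uniqueness by a cutoff argument; you instead invert mode by mode, claiming $iky+\nu k^2-\nu\p_y^2$ is ``accretive with real part $\geq \nu k^2$, hence boundedly invertible.'' That inference has a gap: accretivity gives injectivity and the a priori bound, but invertibility requires \emph{maximal} accretivity (a range condition), and you cannot get it from Lax--Milgram on $H^1(\R)$ directly because the form $ik\langle y u,v\rangle$ is unbounded there --- this unboundedness of $y$ is precisely why the paper truncates the domain. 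The gap is fillable (the complex Airy operator $-\p_y^2+iy$ is a classical m-accretive operator, or one can repeat the paper's truncation in $y$ for each mode), but as written the existence step is incomplete. The concluding decay estimate, applying enhanced dissipation to $\omega-g$, is identical to the paper's.

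One quantitative claim is also overstated: $b_\infty$ is \emph{not} pinned between constant multiples of $\nu^{-1}(k^2+\zeta^2)^{-1}$. For $k=\zeta=1$ and $\nu\to 0$ one has $b_\infty=\int_0^\infty e^{-\nu\Phi(r)}dr$ with $\Phi(r)\sim r^3/3$, so $b_\infty\sim \nu^{-1/3}\ll \nu^{-1}/2$, and the lower bound with a $\nu^{-1}$ factor fails. The correct two-sided bound, as in \eqref{eq:14}, is asymmetric: $c\,(k^2+\eta^2)^{-1}\leq b_\infty \leq \nu^{-1}(k^2+\eta^2)^{-1}$ with $c$ \emph{independent} of $\nu$. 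Your Laplace estimate at $r=0$ in fact produces $\nu^{-1}(k^2+\zeta^2)^{-1}\bigl(1-e^{-\nu(k^2+\zeta^2)\,c|\zeta|/|k|}\bigr)$, and you must split into the two regimes the paper uses: if $\nu(k^2+\zeta^2)|\zeta|/|k|\gtrsim 1$ this is comparable to $\nu^{-1}(k^2+\zeta^2)^{-1}$, while otherwise it is comparable to $|\zeta|/|k|$, which is still $\gtrsim (k^2+\zeta^2)^{-1}$ by $k^2+\zeta^2\geq 2|k\zeta|$ and $|\zeta|\geq 1$. Since the corollary only asserts an algebraic rate $(k^2+\eta^2)^{-1}$, this weaker lower bound suffices, so the misstatement does not damage the conclusion --- but the uniform-in-$\nu$ constant on the lower side should be stated and proved as above rather than claimed with a $\nu^{-1}$ prefactor.
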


\begin{proof}
  Concerning the resonant case, we note that
  $\tilde{f}(\tau,k,\eta+k\tau)=\tilde{f}_0(k,\eta)$ does not depend on time and
  hence obtain the claimed solution formula as a corollary of Lemma
  \ref{lem:NDuhamel}.
  We further note that using several changes of variables
  \begin{align*}
    b(t,k,\eta) &= \int_0^t \exp \left( -\nu \int_\tau^t k^2+(\eta+k(t-s))^2 ds \right) d\tau \\
    &=  \int_0^t \exp \left( -\nu \int_0^{t-\tau} k^2+(\eta+k\sigma)^2 d\sigma \right) d\tau \\
                &= \int_0^t \exp \left( -\nu \int_0^{\xi} k^2+(\eta+k\sigma)^2 d\sigma \right) d\xi \\
    &= \int_0^t \exp \left( -\nu k^2 \xi  - \nu \frac{(\eta+k\xi)^3 - \eta^3}{3k} \right) d\xi.
  \end{align*}
  In particular, we note that $b(t,k,\eta)$ is non-negative and monotonically
   increasing in $t$ and uniformly bounded by $\|\exp(-\nu k^2
   \xi)\|_{L^1}=\frac{1}{\nu k^2}$ and thus there exists
   $ b_\infty(k,\eta)= \lim_{t \rightarrow \infty}b(t,k,\eta)$.
  We claim that for $|\eta| \geq 1$, it holds that
  \begin{align}
    \label{eq:14}
   \frac{c}{k^2+ \eta^2} \leq  b_\infty(k,\eta) \leq \frac{\nu^{-1}}{k^2+\eta^2}.
  \end{align}
  In particular, it follows that for large $t$, $\|b(t,k, \eta)
  \tilde{f}_0(k,\eta-kt)\|_{L^2} \approx \|f_0(x-ty,y)\|_{\dot H^{-2}}$
  generally only decays with algebraic rates depending on the regularity of
  $f_0$.
  We remark that we have seen in Section \ref{sec:couette}, that for $\nu
  \downarrow 0$  $b(t,k,\eta)=t$ is unbounded in $t$ but independent of $\nu, k,
  \eta$.
  \\
  
  It remains to establish \eqref{eq:14}.
  Here, we note that for $\xi<\frac{|\eta|}{10|k|}$ it holds that
  \begin{align*}
    \frac{(\eta+k\xi)^3 - \eta^3}{3k \xi} \xi \approx \eta^2 \xi,
  \end{align*}
  since the difference quotient approximates the derivative.
  Hence, on that interval
  \begin{align*}
    & \quad \int_0^{\frac{|\eta|}{10|k|}} \exp \left( -\nu k^2 \xi  - \nu \frac{(\eta+k\xi)^3 - \eta^3}{3k} \right) d\xi \\
    &\approx -\frac{1}{k^2 + \eta^2}  \nu^{-1}\exp(-\nu(k^2+\eta^2)\xi)\bigg|_{\xi=0}^{\frac{|\eta|}{10|k|}} .
  \end{align*}
  We remark that for $k, \eta$ small compared to $\nu^{-1}$ the $\nu^{-1}\exp(\cdot)$ difference can be
  estimated by the difference quotient in $0$ and hence can be bounded below by a
  uniform constant.
  If instead $(k^2+\eta^2)\frac{|\eta|}{10|k|}$ is of size $\nu^{-1}$ or larger,
  the difference of the exponentials is bounded below and hence the integral on
  this interval is comparable to $\frac{\nu^{-1}}{k^2+\eta^2}$.
  On the complement, $\xi \geq \frac{|\eta|}{10|k|}$, we simply estimate
  \begin{align*}
    0 & \leq \int_{\xi \geq \frac{|\eta|}{10|k|} } \exp \left( -\nu k^2 \xi  - \nu \frac{(\eta+k\xi)^3 - \eta^3}{3k} \right) d\xi \\
    & \leq \int_{\xi \geq \frac{|\eta|}{10|k|} } \exp \left( -\nu k^2 \xi\right) d\xi \leq \frac{\nu^{-1}}{k^2} \exp(-\nu \frac{k\eta}{10}).
  \end{align*}
  This concludes our proof for the resonant case.
  \\
  
  In the stationary case, we claim that
  \begin{align*}
   L g := y \p_x g + \nu \Delta g =f_0,
  \end{align*}
  possesses a unique weak solution $g \in H^1(\T \times \R)$ and that
  $\|g\|_{H^1}\leq 2 \nu^{-1} \|f_0\|_{L^2}$.
  Taking $g$ as a particular solution, the claim then follows immediately by the
  enhanced dissipation acting on the particular (and thus unforced) solution
  with initial data $\omega_0-g$.
  It remains to prove the existence and uniqueness of solutions.
  As $y \p_x$ is an unbounded operator on $H^1(\T \times \R)$ we argue via a
  family of auxiliary problems.
  For each $R>0$ we define $g_R \in H^1_0(\T \times (-R,R))$ with $\int g_R dx =0$ to be the unique solution of
  \begin{align*}
    y \p_x g + \nu \Delta g &=f_0 \text{ in } \T \times (-R,R), \\
    g|_{y=-R,R}&=0.
  \end{align*}
  We note that since $y$ is bounded on $\T \times (-R,R)$,
  \begin{align*}
    B_R(h_1,h_2)= \nu \langle \nabla h_1, \nabla h_2 \rangle_{L^2} + \langle h_1, y \p_x h_2 \rangle_{L^2}
  \end{align*}
  is a bounded bilinear form on $H^1_0(\T \times (-R,R))$ and that
  \begin{align*}
    B_R(h,h)= \nu \|\nabla h\|_{L^2}^2 + \int y \frac{1}{2}\p_x|h|^2 = \nu \|\nabla h\|_{L^2}^2 \geq \frac{\nu}{2} \|h\|_{H^1}^2
  \end{align*}
  is coercive, where we used that $\int g_R dx=0$.
  Hence, $g_R$ exists by the Lax-Milgram theorem and by testing the equation we
  further obtain that
  \begin{align*}
    \|g_R\|_{H^1} \leq \nu^{-1}\|f_0\|_{L^2}.
  \end{align*}
  The bounded sequence $(g_R)_R$ hence posses a weak limit $g \in H^1(\T \times
  \R)$ along some subsequence $R_j\rightarrow \infty$.
  We claim that $g$ is a weak solution.
  Indeed, let $\phi \in C_c^\infty(\T \times \R)$ be a test function whose support is
  contained in $\T \times [-L,L]$. Then for any $R>L$ it holds that
  \begin{align*}
    \nu \int \nabla g_R \cdot \nabla \phi + \int y \p_x\phi g_R = \int \phi f,
  \end{align*}
  which implies 
  \begin{align*}
    \nu \int \nabla g \cdot \nabla \phi + \int y \p_x\phi g = \int \phi f
  \end{align*}
  by letting $R=R_j\rightarrow \infty$.
  Concerning the uniqueness, taking differences of two solutions $g_1,g_2$ it
  suffices to show that the problem with $f_0$ only has a trivial solution.
  Suppose not and let $g \in H^1(\T \times \R)$ be non-trivial such that
  \begin{align*}
     y \p_x g + \nu \Delta g =0.
  \end{align*}
  Let $\chi(y)$ be a standard smooth, symmetrically decreasing cut-off function and
  $\chi_R(y)=\chi(\frac{y}{R})$. Then $\chi_R(y)g \in H^1$ is compactly
  supported test function and
  \begin{align*}
    \int g y \p_x \chi_R(y)g + \nu \int \nabla g \nabla (\chi_R(y)g) = 0 \\
    \Leftrightarrow 0 + \int \chi_R(y)|\nabla g|^2 + \frac{1}{R} \int \chi'(\frac{y}{R}) |g|^2=0.
  \end{align*}
  Here, the first integral vanished as a total derivative in $x$, the second
  integral converges to $\|\nabla g\|_{L^2}^2$ by monotone or dominated
  convergence and the last integral is bounded by $\frac{C}{R}\|g\|_{L^2}^2$ and
  hence tends to zero.
  Thus $\|\nabla g\|_{L^2}^2=0$ and hence $g=0$, which concludes the proof.
\end{proof}

\subsection{The Nonlinear Problem}
\label{sec:nonlinearN}
In the following we discuss the nonlinear forced Navier-Stokes equations
\begin{align}
  \label{eq:1}
  \begin{split}
  \dt \omega + y \p_{x} \omega - \nu \Delta \omega &= f,\\
  v &= \nabla^{\bot} \phi, \\
 \Delta \phi &=\omega.
  \end{split}
\end{align}
We remark that in the linearized problem the evolution of the $x$-average (and
hence the underlying shear) and its $L^2$-orthogonal complement decoupled and that there we could hence without
loss of generality restrict to study the case with vanishing $x$-average.
In the present nonlinear case, the forcing and the nonlinearity introduce a
coupling which, while damped by dissipation, poses technical challenges.
In view of Section \ref{sec:couette} we thus opt for a somewhat simpler setting
where $f$ is chosen to counteract changes to the shear profile due to $\langle v
\cdot \nabla \omega\rangle_x$.
That is, with slight change of notation we consider the problem
\begin{align}
    \begin{split}
  \dt \omega + y \p_{x} \omega - \nu \Delta \omega &= f - (v \cdot \nabla \omega)_{\neq},\\
  v &= \nabla^{\bot} \phi, \\
 \Delta \phi &=\omega,
  \end{split}
\end{align}
where $(\cdot)_{\neq}$ denotes the $L^{2}$ projection on non-zero frequencies
with respect to $x$ and $f=f_{\neq}$.

Given any sufficiently small given stationary forcing $f(t,x,y)=f_0(x,y)$, these
equations then admit a stationary solution.
\begin{prop}
  \label{prop:fixedpoint}
  Suppose that $\|f\|_{H^1}\leq C_1 \nu^2 $ with $C_1\leq \frac{1}{40}$, then there exists $g \in H^{2}$ a weak
  solution of
  \begin{align}
    \label{eq:15}
    y \p_{x} g +\nu \Delta g+ (v[g] \cdot \nabla g)_{\neq 0} =f,
  \end{align}
  with $\|g\|_{H^1}\leq 2C_1 \nu$ and $\|g\|_{H^2}\leq 2C_1$.
\end{prop}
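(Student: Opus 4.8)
The plan is to solve \eqref{eq:15} by a contraction mapping argument, using the linear stationary theory already developed in the proof of Corollary \ref{cor:linearizedcases} as the inversion of the leading operator. Concretely, denote by $L^{-1}$ the solution operator of the linear problem $y\p_x g + \nu\Delta g = F$ on functions with vanishing $x$-average, whose existence, uniqueness and bound $\|L^{-1}F\|_{H^1}\leq 2\nu^{-1}\|F\|_{L^2}$ were established there via Lax--Milgram and the cancellation $\langle y\p_x g, g\rangle = 0$. For $h$ with $h=h_{\neq}$ I set $v[h] = \nabla^{\bot}\Delta^{-1}h$ and define the map
\[
  \mathcal{T}(h) := L^{-1}\bigl(f - (v[h]\cdot\nabla h)_{\neq}\bigr),
\]
so that a fixed point of $\mathcal{T}$ is precisely a weak solution of \eqref{eq:15}. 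I would carry the argument out on the closed ball $B := \{h : h = h_{\neq},\ \|h\|_{H^1}\leq 2C_1\nu,\ \|h\|_{H^2}\leq 2C_1\}$, equipped with the $H^1$ metric.

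The first step is the self-mapping property. Since the Biot--Savart map $h\mapsto v[h]$ gains a derivative, $\|v[h]\|_{L^\infty}\lesssim\|v[h]\|_{H^2}\lesssim\|h\|_{H^1}$ by Sobolev embedding, and hence the nonlinearity obeys $\|(v[h]\cdot\nabla h)_{\neq}\|_{L^2}\leq\|v[h]\|_{L^\infty}\|\nabla h\|_{L^2}\lesssim\|h\|_{H^1}^2\lesssim C_1^2\nu^2$ on $B$. Combined with $\|f\|_{L^2}\leq\|f\|_{H^1}\leq C_1\nu^2$ this gives $\|F_h\|_{L^2}\leq (1+O(C_1))C_1\nu^2$, whence the linear bound yields $\|\mathcal{T}(h)\|_{H^1}\leq 2\nu^{-1}\|F_h\|_{L^2}\leq 2C_1\nu$ once $C_1$ is small. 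For the $H^2$ bound I would test $y\p_x g + \nu\Delta g = F_h$ against $\Delta g$; the decisive observation is that the transport contribution collapses to a lower-order term,
\[
  \langle y\p_x g, \Delta g\rangle = -\langle\p_x g,\p_y g\rangle,
\]
because $y$ is independent of $x$, so the $\p_x^2 g$ and $y\p_x\p_y g$ pieces integrate to zero. This produces $\nu\|\Delta g\|_{L^2}^2\leq\|F_h\|_{L^2}\|\Delta g\|_{L^2}+\|\nabla g\|_{L^2}^2$, and since $\|\Delta g\|_{L^2}=\|D^2 g\|_{L^2}$ on the vanishing-$x$-average torus-line, absorbing and inserting the $H^1$ bound gives $\|g\|_{H^2}\lesssim\nu^{-3/2}\|F_h\|_{L^2}\lesssim C_1\nu^{1/2}\leq 2C_1$. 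Thus $\mathcal{T}(B)\subseteq B$.

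The second step is the contraction estimate. For $h_1,h_2\in B$ the difference $\mathcal{T}(h_1)-\mathcal{T}(h_2)$ equals $L^{-1}$ applied to $-(v[h_1-h_2]\cdot\nabla h_1 + v[h_2]\cdot\nabla(h_1-h_2))_{\neq}$; estimating each product in $L^2$ as above gives a bound $\lesssim(\|h_1\|_{H^1}+\|h_2\|_{H^1})\|h_1-h_2\|_{H^1}\lesssim C_1\nu\|h_1-h_2\|_{H^1}$, so the linear $H^1$ estimate yields $\|\mathcal{T}(h_1)-\mathcal{T}(h_2)\|_{H^1}\lesssim C_1\|h_1-h_2\|_{H^1}$, a strict contraction provided $C_1$ is small enough, as guaranteed by $C_1\leq\frac{1}{40}$. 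Finally $B$ is complete in the $H^1$ metric, since an $H^1$-Cauchy sequence in $B$ has an $H^2$-weakly convergent subsequence whose $H^2$ bound passes to the limit by weak lower semicontinuity; the Banach fixed point theorem then furnishes $g\in B$ solving \eqref{eq:15} with the stated bounds.

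I expect the main obstacle to be the $H^2$ a priori estimate in the presence of the unbounded coefficient $y$: one cannot simply move $y\p_x g$ to the right-hand side and invoke elliptic regularity, since $y\p_x g$ need not lie in $L^2$. The whole estimate hinges on the algebraic cancellation in $\langle y\p_x g,\Delta g\rangle$, the higher-order analogue of the coercivity identity already used for $H^1$, which is what prevents the transport term from spoiling the gain of two derivatives from $\nu\Delta$. A secondary, purely technical point is the completeness of the $H^2$-bounded ball under the $H^1$ metric, which I would handle by the weak-compactness argument indicated above, or equivalently by running a Picard iteration starting from $g=0$ that stays in $B$ and is Cauchy in $H^1$.
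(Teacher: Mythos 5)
Your proposal is correct and follows essentially the same route as the paper: inversion of $L = y\p_x + \nu\Delta$ via the Lax--Milgram theory already set up in the proof of Corollary \ref{cor:linearizedcases}, a Banach fixed point for $g \mapsto L^{-1}\bigl(f - (v[g]\cdot\nabla g)_{\neq}\bigr)$ on a ball of radius $O(C_1\nu)$ in $H^1$ with the Sobolev embedding controlling the quadratic nonlinearity, and the same key cancellation $\langle y\p_x g, \Delta g\rangle = -\langle \p_x g, \p_y g\rangle$ to obtain the $H^2$ bound despite the unbounded coefficient $y$. The only cosmetic difference is that you build the $H^2$ constraint into the ball (forcing your weak-compactness completeness argument), whereas the paper runs the iteration purely in the $H^1$ ball and derives the $H^2$ estimate a posteriori by (formally) testing the fixed-point equation, which spares that technical point.
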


\begin{proof}
  We argue by fixed point iteration.
  Let $L=y\p_{x}+\nu \Delta$ be the linear operator on the space
  \begin{align*}
    X:=\{ u \in H^1(\T \times \R): \langle u \rangle_x=0\}.
  \end{align*}
  We then note that $\nu \Delta$ is a symmetric operator on this space with
  respect to the $L^2$ inner product, possesses a spectral gap and that $y\p_x$
  is anti-symmetric.
  We recall that well-posedness of $L^{-1}$ and its mapping properties have been
  studied in the proof of Corollary \ref{cor:linearizedcases}. 

  Rephrasing our equation as
  \begin{align}
    \label{eq:fixedp}
    L g &= f- (v[g]\cdot \nabla g)_{\neq},
  \end{align}
  we intend to show that there exists a unique fixed point $g$ of the mapping
  \begin{align*}
    \Psi: g & \mapsto L^{-1}(f- (v[g] \nabla g)_{\neq}).
  \end{align*}
  In particular, we show that under the conditions of the proposition, there
  exists $c=c(C_1)>0$ such that $\Psi:B_{c\nu}(0)\rightarrow B_{c\nu}(0)$ and that $\Psi$ is a contraction.
  The existence of $g$ then follows by the Banach fixed point theorem.
  \\
  
  Let thus $c>0$ to be fixed later and $g \in X$ with $\|g\|_{H^1}\leq c \nu$.
  Then $v[g] \in L^\infty$ with $\|v_g\|_{L^\infty}\leq 10 c \nu$ by the Sobolev embedding and $\|\nabla g\|_{L^2} \leq
  c \nu$, thus
  \begin{align*}
    \|f- (v[g] \nabla g)_{\neq}\|_{L^2}\leq C_1 \nu^2 + 10 c^2 \nu^2.
  \end{align*}
  Let now $u=\Psi(g)$ and assume for the moment that $u$ is decaying
  sufficiently quickly such that $\int u y \p_x u=0$.
  Formally testing with $-u$ it follows that
   \begin{align*}
     \nu \|\nabla u \|_{L^2}^2 &= - \int u (f- (v[g] \nabla g)_{\neq}) \leq \|u\|_{L^2} \|f- (v[g] \nabla g)_{\neq}\|_{L^2}\\
     \Rightarrow \|\nabla u \|_{L^2} &\leq \frac{1}{\nu} \|f- (v[g] \nabla g)_{\neq}\|_{L^2}\leq  C_1 \nu+ 10 c^2 \nu,
   \end{align*}
  where we used that $\|u\|_{L^2}\leq \|\p_x u\|_{L^2}\leq \|\nabla u\|_{L^2}$
  due to the vanishing $x$ average.
  Hence, $\Psi$ maps $B_{c\nu}(0)$ into itself provided $C_1 + 10c^2 \leq c$,
  which is satisfied by $c=2C_1\leq \frac{1}{20}$.
  We remark that, as in Corollary \ref{cor:linearizedcases}, $y\p_x$ is an
  unbounded operator on $H^1$ and that hence, we may only formally test with
  $-u$. In order to make the preceding argument rigorous, we may thus again consider a standard cut-off function
  $\Psi_R(y)=\Psi_1(\frac{y}{R})$ on $\R$ and test with $-\Psi_R(y)u$ and
  subsequently take the limit $R \rightarrow \infty$.
 \\

  Concerning the contraction property, let $g_1,g_2 \in X$ with $\|g_1\|_{H^1},
  \|g_2\|_{H^1}\leq c \nu$. Then by the previous estimate and the quadratic
  structure of the nonlinearity:
  \begin{align*}
    \|\Psi(g_1)-\Psi(g_2)\|_{H^1}^2 &\leq \frac{1}{\nu^2} \|(v[g_1] \nabla g_1)_{\neq}- (v[g_1] \nabla g_1)_{\neq}\|_{L^2}^2 \\
    &\leq \frac{C}{\nu^2} \|g_{1}+g_{2}\|_{H^{1}}^2 \| g_{1}-g_{2}\|_{H^1}^2 \leq C (2c)^2  \|g_{1}-g_{2}\|_{H^{1}}^2,
  \end{align*}
  where $C \leq 10$ is the constant of the Sobolev embedding. By our choice of
  $c$, this is hence a contraction.
  Thus, a solution $g \in X$ with $\|g\|_{H^1}\leq c \nu$, $c=2C_1$ exists by
  the Banach fixed point theorem.
  In order to establish improved regularity, we formally test the fixed point
  equation with $\nabla g$ to obtain that
  \begin{align*}
    \nu \|\Delta g\|_{L^2}^2 + \int \Delta g y \p_x g \leq \|\Delta g\|_{L^2} \|f - v_g \cdot \nabla g)_{\neq}\|_{L^2}.
  \end{align*}
  Furthermore, integrating by parts we may estimate
  \begin{align*}
    \int \Delta g y \p_x g = - \int y \p_x \frac{|\nabla g|^2}{2} + \int \p_yg \p_x g = 0  + \int \p_yg \p_x g \leq \|g\|_{H^1}^2,
  \end{align*}
  which is controlled by the previous estimate.
  
\end{proof}

We have thus constructed one particular stationary solution of the nonlinear,
viscous, forced problem. We in particular stress that due to the forcing the
stationary state $g$ is not stable in $H^s,s>0$ with respect to Lagrangian
variables $(x-tU(y),y)$, but only with respect to Eulerian coordinates, and not damped further.
However, as we show in the following it is a stable asymptotic state and small
perturbation are damped to $g$.
\begin{prop}
  Let $f(t,x,y)=f_0(x,y)$ satisfy the assumptions of Proposition
  \ref{prop:fixedpoint} and let $g$ be the associated stationary solution with
  $\|\nabla g\|_{L^\infty}\leq c \nu$.
  Suppose that $\omega$ is a global solution of \eqref{eq:21} with $\langle \omega(t) \rangle_x=0$.
  Then $\omega^\star$ satisfies the equation
  \begin{align}
    \label{eq:16}
    \dt \omega^{\star} + y \p_{x} \omega^{\star} - \nu \Delta \omega^{\star} + \left(v^{\star} \cdot \nabla \omega^{\star} - v[g]\cdot \nabla \omega^{\star}- v^{\star}\cdot \nabla g\right)_{\neq}=0,
  \end{align}
  where $()_{\neq}$ denotes the $L^2$ projection onto functions with vanishing $x$-average.
  Furthermore, $\omega^{\star}$ is nonlinearly stable in $L^{2}$ and decays at an (at
    least) exponential decay rate $\exp(-C \nu t)$.
\end{prop}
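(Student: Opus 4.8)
The plan is to derive the stated equation \eqref{eq:16} for $\omega^{\star}$ by subtraction, and then to close the estimate with a single $L^2$ (enstrophy) energy identity in which the two transport-type nonlinear contributions cancel identically, leaving only the linearization term $v^{\star}\cdot\nabla g$, which is absorbed by the dissipation thanks to the smallness of $g$.

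First I would record the equation satisfied by $\omega^{\star}=\omega-g$. Since $v[\cdot]$ is linear, $v^{\star}:=v[\omega^{\star}]=v[\omega]-v[g]$, and subtracting the stationary equation \eqref{eq:15} from the evolution equation for $\omega$ while expanding
\[
v[\omega]\cdot\nabla\omega=(v^{\star}+v[g])\cdot\nabla(\omega^{\star}+g)
\]
produces exactly \eqref{eq:16}: the purely stationary nonlinearity $v[g]\cdot\nabla g$ cancels against the one in \eqref{eq:15}, and the remaining terms are the self-interaction $v^{\star}\cdot\nabla\omega^{\star}$, the transport $v[g]\cdot\nabla\omega^{\star}$, and the linearization $v^{\star}\cdot\nabla g$. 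I note also that $\langle\omega^{\star}\rangle_x=0$, since both $\omega$ and $g$ have vanishing $x$-average, so that the projection $(\cdot)_{\neq}$ is harmless when testing against $\omega^{\star}$.

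Next I would test \eqref{eq:16} with $\omega^{\star}$. The transport term $\int y\,\p_x\omega^{\star}\,\omega^{\star}$ vanishes as a total $x$-derivative; the dissipation yields $\nu\|\nabla\omega^{\star}\|_{L^2}^2$; and, crucially, both $\int(v^{\star}\cdot\nabla\omega^{\star})\,\omega^{\star}$ and $\int(v[g]\cdot\nabla\omega^{\star})\,\omega^{\star}$ vanish, because $v^{\star}$ and $v[g]$ are divergence free and the integrands equal $\nabla\cdot\!\big(\tfrac12\,|\omega^{\star}|^2\,(\cdot)\big)$. Only the linearization term survives:
\[
\frac{1}{2}\frac{d}{dt}\|\omega^{\star}\|_{L^2}^2 + \nu\|\nabla\omega^{\star}\|_{L^2}^2 = -\int (v^{\star}\cdot\nabla g)\,\omega^{\star}.
\]
I bound the right-hand side by $\|\nabla g\|_{L^\infty}\,\|v^{\star}\|_{L^2}\,\|\omega^{\star}\|_{L^2}\leq c\nu\,\|v^{\star}\|_{L^2}\,\|\omega^{\star}\|_{L^2}$ using the hypothesis $\|\nabla g\|_{L^\infty}\leq c\nu$. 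Since $\omega^{\star}$ has vanishing $x$-average, every $x$-frequency satisfies $|k|\ge 1$, so $v^{\star}=\nabla^{\bot}\Delta^{-1}\omega^{\star}$ obeys $\|v^{\star}\|_{L^2}\leq\|\omega^{\star}\|_{L^2}$ and moreover $\|\omega^{\star}\|_{L^2}\leq\|\p_x\omega^{\star}\|_{L^2}\leq\|\nabla\omega^{\star}\|_{L^2}$. Hence the linearization term is $\leq c\nu\,\|\nabla\omega^{\star}\|_{L^2}^2$ and can be absorbed into the dissipation, giving
\[
\frac{1}{2}\frac{d}{dt}\|\omega^{\star}\|_{L^2}^2 + (1-c)\,\nu\,\|\nabla\omega^{\star}\|_{L^2}^2 \leq 0.
\]
A final application of the Poincaré inequality $\|\nabla\omega^{\star}\|_{L^2}\geq\|\omega^{\star}\|_{L^2}$ and Grönwall's lemma then yield $\|\omega^{\star}(t)\|_{L^2}\leq e^{-(1-c)\nu t}\|\omega^{\star}(0)\|_{L^2}$; since $c=2C_1\leq\tfrac{1}{20}$, this gives the claimed exponential decay (in fact at rate $\exp(-\tfrac{\nu}{2}t)$), and the monotone decay of the $L^2$ norm gives nonlinear stability.

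The main technical obstacle is making the energy identity rigorous: because $y\,\p_x$ is unbounded on $H^1(\T\times\R)$, testing with $\omega^{\star}$ is only formal. As in Corollary \ref{cor:linearizedcases} and Proposition \ref{prop:fixedpoint}, I would instead test with $\chi_R(y)\,\omega^{\star}$, where $\chi_R(y)=\chi(y/R)$ is a standard symmetric cutoff, show that the commutator contributions are controlled by $\tfrac{C}{R}\|\omega^{\star}\|_{L^2}^2$ and vanish as $R\to\infty$, and thereby recover the identity above. I emphasize that no smallness of $\omega^{\star}$ itself is needed: the self-interaction vanishes identically in the enstrophy balance (a feature of two-dimensional flow), so only the smallness of $g$ through $\|\nabla g\|_{L^\infty}\leq c\nu$ enters, and the conclusion therefore holds for every global solution with vanishing $x$-average.
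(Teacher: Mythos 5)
Your proposal is correct and follows essentially the same route as the paper: derive \eqref{eq:16} by subtracting the stationary equation, test with $\omega^{\star}$ so that the transport and the two divergence-form nonlinear terms vanish, absorb the surviving term $\langle \omega^{\star}, v^{\star}\cdot\nabla g\rangle$ into the dissipation using $\|\nabla g\|_{L^\infty}\leq c\nu$ together with the Poincar\'e inequality in $x$ (the paper integrates by parts once more onto $\phi^{\star}$, while you bound $\|v^{\star}\|_{L^2}\leq\|\omega^{\star}\|_{L^2}$ directly, an inessential variation), and conclude by Gr\"onwall. Your explicit cutoff argument to justify the formal testing, and your observation that no smallness of $\omega^{\star}$ is needed, are consistent with (indeed slightly more careful than) the paper's treatment.
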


\begin{proof}
  We note that the equation immediately follows by subtracting \eqref{eq:15}
  from \eqref{eq:21}.
  The $L^{2}$ stability then follows from a formal energy argument.
  That is, testing with $\omega^{\star}$ we obtain that
  \begin{align*}
    \frac{d}{dt}\|\omega^{\star}\|_{L^{2}}/2 + 0 + \nu \|\nabla \omega^{\star}\|_{L^{2}}^{2} +0 -0 - \langle \omega^{\star}, v^{\star}\cdot \nabla g \rangle=0.
  \end{align*}
  Note that here we use the assumption that $\omega$ and $\omega^\star$ is
  sufficiently regular such that $v^{\star} \cdot \nabla \omega^{\star}$ is
  well-defined and that we may integrate by parts.
  
  Using that $\|\nabla g\|_{L^{\infty}}\leq c \nu$ for a small constant $c$, then
  we can estimate
  \begin{align*}
    \langle \omega^{\star}, v^{\star}\cdot \nabla g \rangle &= \langle \omega^{\star}, \nabla \phi^{\star} \cdot \nabla^{\bot}g \rangle\\
    = - \langle \nabla \omega^{\star}, \phi^{\star} \nabla^{\bot}g \rangle &\leq c \nu \|\nabla \omega^{\star}\|_{L^{2}}\|\phi^{\star}\|_{L^{2}}.
  \end{align*}
  Since $\omega$ and hence $\omega^\star$ and $\phi^\star$ possess a
  vanishing average in $x$ and hence can be controlled using the Poincar\'e
  inequality.
  Thus, it follows that
  \begin{align*}
    \frac{d}{dt}\|\omega^{\star}\|_{L^{2}}/2 \leq -\frac{\nu}{2} \|\nabla \omega^{\star}\|_{L^{2}}^{2}\leq -\frac{\nu}{2} \|\omega^{\star}\|_{L^{2}}^{2} ,
  \end{align*}
  which implies (at least) exponential decay with rate $\exp(-\frac{\nu}{2}t)$.
\end{proof}

\bibliographystyle{alpha} \bibliography{citations}

\end{document}